\DeclareSymbolFont{cyrletters}{OT2}{wncyr}{m}{n}
\DeclareMathSymbol{\sha}{\mathalpha}{cyrletters}{"58}
\newcommand{\midskip}{\hspace{0.1in}}
\newcommand{\abs}[1]{\left\lvert#1\right\rvert}
\newcommand{\norm}[1]{\lVert#1\rVert}
\newcommand{\mbf}{\mathbb{F}}
\newcommand{\mbr}{\mathbb{R}}
\newcommand{\mbz}{\mathbb{Z}}
\newcommand{\mbc}{\mathbb{C}}
\newcommand{\mbq}{\mathbb{Q}}
\newcommand{\mba}{\mathbb{A}}
\newcommand{\mbp}{\mathbb{P}}
\newcommand{\mbo}{\mathcal{O}}
\renewcommand{\:}{\colon}
\newcommand{\ra}{\rightarrow}
\newcommand{\iso}{\cong}
\newcommand{\bs}{\backslash}
\newcommand{\ssq}{\subseteq}
\DeclareMathOperator{\Pic}{Pic}
\DeclareMathOperator{\Div}{Div}
\DeclareMathOperator{\Princ}{Princ}
\newcommand{\wtilde}{\widetilde}
\DeclareMathOperator{\ord}{ord}
\DeclareMathOperator{\Aut}{Aut}
\DeclareMathOperator{\Gal}{Gal}
\renewcommand{\div}{\operatorname{div}}
\newcommand*{\longhookrightarrow}{\ensuremath{\lhook\joinrel\relbar\joinrel\rightarrow}}
\newcommand*{\inj}{\longhookrightarrow}
\newcommand{\bbm}{\begin{bmatrix}}
\newcommand{\ebm}{\end{bmatrix}}
\newcommand{\bpm}{\begin{pmatrix}}
\newcommand{\epm}{\end{pmatrix}}
\newcommand{\xym}[1]{\xymatrix{#1}}
\newcommand{\gen}[1]{\langle #1 \rangle}
\newcommand{\allstar}[1]{\[\begin{aligned} #1  \end{aligned}\]}
\newcommand{\set}[1]{\left \{#1 \right \}}
\newtheorem{theorem}{Theorem}[section]
\newtheorem{lemma}[theorem]{Lemma}
\newtheorem{proposition}[theorem]{Proposition}
\newtheorem*{theorem*}{Theorem}
\newtheorem{corollary}[theorem]{Corollary}
\newtheorem{question}[theorem]{Question}
\theoremstyle{definition}
\newtheorem{definition}[theorem]{Definition}
\newtheorem{remark}[theorem]{Remark}
\newtheorem{example}[theorem]{Example}
\newcommand{\newtext}[1]{{#1}}
\DeclareMathOperator{\rank}{rk}
\DeclareMathOperator{\ranktwo}{rk_2}
\DeclareMathOperator{\Spec}{Spec}
\DeclareMathOperator{\Cl}{Cl}
\DeclareMathOperator{\id}{id}
\newcommand{\E}{\mathcal{E}}
\newcommand{\Bl}{\mathrm{Bl}}
\newcommand{\Sp}{\operatorname{Sp}}
\newcommand{\aff}{\mathrm{aff}}
\newcommand{\al}{\mathrm{al}}
\newcommand{\act}[2]{{}^{#1}\!#2}
\newcommand{\supellname}{\newtext{$\mu_3$-branched cover of $\mbp_\mbq^1$} }
\newcommand{\riemannrochspace}[1]{L(#1)}
\let\svthefootnote\thefootnote
\title{An explicit family of cubic number fields with large $2$-rank of the class group}
\author{Kulkarni, Avinash	
	}
\date{\today}
\begin{document}

\maketitle

\begin{abstract}
	We show how to construct infinite families of explicitly determined cubic number fields whose class group has a subgroup isomorphic to $(\mbz/2)^8$ using degree $1$ del Pezzo surfaces. We illustrate the method and provide an example of such a family.%
	\let\thefootnote\relax
	\footnote{2010 {\it{Mathematics Subject Classification.}} Primary: 11R29. Secondary: 11G30, 14J26. }
	\footnote{{\it{Key words and phrases.}} class group, cubic number field, del Pezzo surface, genus 4 curves, ranks bounds.}
	\footnote{The author was supported by NSERC.}	
	\addtocounter{footnote}{-3}\let\thefootnote\svthefootnote
\end{abstract} 

\section{Introduction}

The task of finding number fields with exotic class groups is an old problem, receiving attention as far back as 1922. In that year, Nagell proved that there are infinitely many quadratic number fields whose class groups have a subgroup isomorphic to $\newtext{(\mbz/n)}$ \cite{nagel1922klassenzahl}. \newtext{The special case of class groups of quadratic number fields has been an active research topic since then \cite{ankeny1955divisibility, honda1968real, yamamoto1970unramified, weinberger1973quadratic}. Class groups for non-quadratic number fields have also been considered \cite{nakano1986construction}, including a construction for cubic number fields whose class groups have $2$-rank at least $6$ \cite{nakano1986construction}. A modern trend is to establish quantitative estimates for the number of number fields of a fixed degree and bounded discriminant whose class group has a special subgroup \cite{murty1999exponents, bilu2005divisibility}.} Number fields of degree indivisible by $p$ whose class group has a subgroup isomorphic to $(\mbz/p)^r$ for $r>2$ are difficult to exhibit in practice; even though it is conjectured by Cohen and Lenstra that these number fields have positive density (when ordered by discriminant with respect to the usual density) \cite{cohen-lenstra}. 

A relatively recent idea to construct infinite families of such number fields (of degree $d$) is to construct an algebraic $d$-gonal curve whose Picard group is rigged to have either $\mu_p^r$ or $(\mbz/p)^r$ as a subgroup (depending on the method) and then to choose fibres over points satisfying certain local conditions. See for example \cite{gile12}, \cite{mes83}. The examples with the best yield of class group $p$-rank from Picard group $p$-rank have been curves fibred over $\mbp_\mbq^1$ with a totally ramified fibre. 

We explicitly construct an infinite family of cubic number fields and prove in Corollary \ref{cor: main result} that the class group of \newtext{``most'' members of } this family has $2$-rank at least $8$. To do this, we use a connection between degree 1 del Pezzo surfaces and trigonal genus 4 curves outlined in Section 2 which allows us good control over both the $2$-torsion in the Jacobian and the ramification of the trigonal map. Finally, in Section 4 we make some remarks on the $2$-rank bounds produced by this method for families of Galois cubic number fields.

\bigskip
\noindent
\textbf{Acknowledgements.} I would like to thank Nils Bruin for his guidance in writing this article. \newtext{I would also like to thank the anonymous referee for numerous helpful suggestions.}

\section{Del Pezzo surfaces, elliptic surfaces, and curves of genus 4}

For a field $k$ of characteristic 0, we denote an algebraic closure of $k$ by $k^{\al}$. For a finitely generated abelian group $M$, we denote the rank as a $\mbz$-module by $\rank M$. Additionally, we denote the $2$-torsion of $M$ by $M[2]$ and denote $\ranktwo M[2]$ to be the rank of $M[2]$ as a $(\mbz/2)$-module. Following \cite{zar08}, we say a collection of 8 distinct points of $\mbp^2$ are in \emph{general position} if no three are co-linear, no six lie on a conic, and any cubic passing through all of these points is non-singular at each of these points. If $X$ is a \newtext{smooth} variety defined over a field $k$ then we denote the class of the canonical divisor by $\kappa_X$ and call $-\kappa_X$ the \emph{anti-canonical} class. By abuse of notation we will often denote a canonical divisor of $X$ by $\kappa_X$. If $X$ is a variety defined over $k$ and $Z$ is a positive codimension subvariety of $X$ defined over $k$ then we denote the \emph{blow-up} of $X$ at $Z$ by $\Bl_Z X$. We call the canonically determined birational map from $X$ to $\Bl_Z X$ the \emph{blow-up (at $Z$) map} and the canonically determined morphism from $\Bl_Z X$ to $X$ the \emph{blow-down}. 

We state some facts regarding the connection between del Pezzo surfaces and certain genus 4 curves. A \emph{del Pezzo} surface of degree 1 is a complete smooth surface which is isomorphic over an algebraically closed field to the blow up of $\mbp^2$ at $8$ points in general position. 

\newtext{For a degree 1 del Pezzo surface $X$, the Riemann-Roch space $\riemannrochspace{-3\kappa_X}$} gives a natural description of $X$ as a sextic hypersurface in the weighted projective space $\mbp(1:1:2:3)$. \newtext{If the characteristic of $k$ is not equal to $2$ then} we can write
	\[
	X \: z^2 = c_0 w^3 + c_2(x,y) w^2 + c_4(x,y) w + c_6(x,y)
	\]
with each $c_m$ homogeneous in $x,y$ of degree $m$ and $c_0 \neq 0$. Every degree 1 del Pezzo surface admits an involution called the \emph{Bertini involution}. On the model above this corresponds to the natural map $(x:y:w:z) \mapsto (x:y:w:-z)$. \newtext{The linear system $\abs{-2\kappa_X}$ } has a single base point and on the model above this is the point $(0:0:c_0:c_0^2)$. \newtext{From the model above, we see that the fixed locus of the Bertini involution is the union of the point $(0:0:c_0:c_0^2)$ and the subvariety of $X$ where $z$ vanishes. The vanishing locus of $z$ has dimension $1$.}

We rely on a classical result connecting the arithmetic of a del Pezzo to the curve fixed by the Bertini involution. We refer the reader to \cite[Section 2.2, Lemma 2.4, Lemma 2.7, Lemma 4.2]{zar08} or \cite[Theorem 4.4]{vak01} for the proof.

\begin{proposition}[\cite{zar08}, \cite{vak01}] \label{del-pezzo}

Let $X$ be a del Pezzo surface of degree 1 defined over a characteristic 0 field $k$ and let $\iota \: X \ra X$ be the Bertini involution and let $C$ be the \newtext{irreducible $1$-dimensional subvariety of the} fixed locus of $\iota$. Let $O$ be the base point of \newtext{the linear system $\abs{-2\kappa_X}$}.

\begin{enumerate}[(a)]

\item
The blow-up $\E$ of $X$ at $O$ is an elliptic surface over $\mbp^1$ with identity section corresponding to the exceptional curve over $O$. Write $f\: \E \ra \mbp^1$.

\item
We have that $C$ is a smooth non-hyperelliptic irreducible curve of genus 4. The strict transform of $C$ in $\E$ is the multi-section $\E[2] \bs \id_\E$. Moreover, the restriction of $f$ to $C$ is a morphism of degree 3.

\item
Denote by $\bar X$ the base-change of $X$ to $k^{\al}$. Each exceptional curve on $\bar X$ corresponds to a unique class in $\Pic \bar X$. Moreover, $\Pic \bar X$ is generated by the exceptional curves on $X$. Furthermore, $\Pic \bar X$ is generated by a set of 8 pairwise orthogonal exceptional curves and the canonical class.

\item
Every exceptional curve on $\bar X$ restricts to an odd theta characteristic of $C$ and the \newtext{anti-}canonical divisor of $X$ restricts to an even theta characteristic of $C$.

\item
$\rank \Pic(X)(\mbq) - 1 \leq \ranktwo J_C[2](\mbq)$

\end{enumerate}

\end{proposition}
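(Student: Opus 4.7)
The plan is to construct a Galois-equivariant homomorphism $\phi\colon \Pic \bar X \to J_C[2]$ whose kernel modulo $2\Pic \bar X$ is generated by $\kappa_X$, and then to pass to $G_k$-invariants and deduce the bound by a short linear-algebra argument.

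To define $\phi$, note that since $C$ is cut out on the weighted projective model by the weight-$3$ coordinate, its class in $\Pic \bar X$ is $-3\kappa_X$. By adjunction $\kappa_C = (\kappa_X + C)|_C = -2\kappa_X|_C$, so $\theta_0 := -\kappa_X|_C$ is a theta characteristic, the even one of part (d). Set
\[
\phi(D) := D|_C + (D \cdot \kappa_X)\,\theta_0.
\]
Then $\deg \phi(D) = D\cdot C + 3(D\cdot \kappa_X) = 0$, so $\phi(D) \in J_C(\bar k)$. For an exceptional curve $E$, part (d) gives that $\theta_E := E|_C$ is odd, so $\phi(E) = \theta_E - \theta_0$ satisfies $2\phi(E) = \kappa_C - \kappa_C = 0$. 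Since $\Pic \bar X$ is generated by exceptional curves (part (c)), $\phi$ takes values in $J_C[2]$. It is Galois-equivariant because $C$ and $\kappa_X$ are $k$-rational, and directly $\phi(\kappa_X) = \kappa_X|_C + \theta_0 = 0$.

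Taking $G_k$-invariants yields an $\mbf_2$-linear map
\[
\bar\phi \colon (\Pic \bar X)^{G_k}/2(\Pic \bar X)^{G_k} \to (J_C[2])^{G_k}
\]
whose domain has $\mbf_2$-dimension $r := \rank \Pic(X)(k)$ (since $(\Pic \bar X)^{G_k}$ is torsion free) and contains the class of $\kappa_X$ in its kernel. The inequality $r - 1 \leq \ranktwo J_C[2](k)$ will follow once $\dim_{\mbf_2} \ker \bar\phi \leq 1$, which in turn reduces to showing $\ker \phi = \langle \kappa_X\rangle + 2\Pic \bar X$ on the full $\Pic \bar X$ (together with a routine check that $G_k$-invariance descends through the quotient, using torsion-freeness of $\Pic \bar X$).

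The main obstacle is this injectivity claim, which I would handle by counting: the $240$ exceptional curves of $\bar X$ pair up into $120$ Bertini-conjugate orbits $\{E, \iota(E)\}$ with $E + \iota(E) \sim -2\kappa_X$, and a non-hyperelliptic genus-$4$ curve has exactly $120$ odd theta characteristics. Because $\iota$ fixes $C$ pointwise, Bertini conjugates restrict to the same theta characteristic, so $E \mapsto \theta_E$ is surjective by the matching counts. Since both $\Pic \bar X / \langle \kappa_X, 2\Pic \bar X\rangle$ and $J_C[2]$ have order $2^8$, surjectivity of $\phi$ is equivalent to the induced map being an isomorphism, forcing $\ker \phi = \langle \kappa_X\rangle + 2\Pic\bar X$ as required.
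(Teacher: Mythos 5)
The paper itself gives no proof of this proposition---it defers entirely to \cite{zar08} (Section 2.2, Lemmas 2.4, 2.7, 4.2) and \cite{vak01}---so the comparison is really with those sources. Your proposal in any case only addresses part (e): parts (a)--(d) are taken as inputs, including (d), which is itself one of the claims to be proved, so at best you have a partial proof. That said, the mechanism you set up for (e) is the right one and matches the classical route in the cited references: the homomorphism $\phi(D) = D|_C + (D\cdot\kappa_X)\theta_0$, the verifications that it lands in $J_C[2]$, kills $\kappa_X$ and $2\Pic \bar X$, and is Galois-equivariant, the adjunction and degree computations, the fact that $\kappa_X \notin 2\Pic\bar X$ (since $\kappa_X^2 = 1$), and the descent to $G_k$-invariants using torsion-freeness are all fine.

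The genuine gap is at the crux, namely $\ker\phi = \langle \kappa_X\rangle + 2\Pic\bar X$, equivalently surjectivity of the induced map between two groups of order $2^8$. First, ``surjective by the matching counts'' is not a valid inference: a map from the set of $120$ Bertini pairs $\{E,\iota E\}$ to the set of $120$ odd theta characteristics is surjective only if it is injective, and injectivity---that distinct pairs restrict to distinct odd thetas---is precisely the nontrivial geometric content here; it is what Zarhin establishes (via compatibility of the restriction map with the intersection pairing mod $2$ and the Weil pairing), and it cannot be extracted from cardinalities alone. Second, even granting that every odd theta characteristic is of the form $E|_C$, surjectivity of $\phi$ onto all of $J_C[2]$ does not follow immediately: you still need that the $120$ differences $\theta - \theta_0$ generate $J_C[2]$. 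This is true but requires an argument; for instance, if they were all contained in an index-$2$ subgroup $W$, then the theta quadratic form $q_{\theta_0}$ would restrict to a linear form on $W$, forcing $W$ to be a $7$-dimensional totally isotropic subspace for the nondegenerate Weil pairing, which is impossible since maximal isotropic subspaces have dimension $4$. With those two points supplied (or by proving the pairing compatibility and deducing injectivity of the induced map directly), your argument for (e) closes; as written, it assumes the essential point it set out to prove, and it leaves (a)--(d) entirely to the literature, exactly as the paper does.
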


\begin{corollary} \label{reduction to ramification}
Let $X$ be a degree 1 del Pezzo surface defined over a number field $k$ and let $f\: C \ra \mbp_k^1$ be the degree 3 morphism from Proposition \ref{del-pezzo}(b). Then the ramification of $f$ over $p \in \mbp_{k}^1$ is classified by the reduction type of the fibre $\E_p$. In particular, if one of the special fibres of $\E$ has additive reduction then $f$ has a totally ramified fibre.
\end{corollary}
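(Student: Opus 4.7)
The plan is to translate the ramification of $f$ into the multiplicity pattern of the roots of a cubic polynomial in $w$, and then match that pattern with the singularity type of the fibre $\E_p$. Throughout I use the sextic model $X\colon z^2 = c_0 w^3 + c_2(x,y) w^2 + c_4(x,y) w + c_6(x,y)$ in $\mbp(1:1:2:3)$ from the preceding discussion.

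First, I would identify the fibres of $f$ concretely. The Bertini involution acts by $z \mapsto -z$, so its $1$-dimensional fixed locus $C$ is cut out by $z = 0$. The base point $O = (0:0:c_0:c_0^2)$ does not lie on $C$ (since $c_0 \neq 0$), so projection $(x:y:w:0) \mapsto (x:y)$ defines a morphism $C \to \mbp^1_k$ that coincides with $f$ by Proposition \ref{del-pezzo}(b). For $p = (x_0:y_0)$, the fibre $f^{-1}(p)$ is the zero scheme in $w$ of the cubic $g_p(w) := c_0 w^3 + c_2(p) w^2 + c_4(p) w + c_6(p)$. Hence the ramification profile of $f$ at $p$ is exactly the multiplicity pattern of the roots of $g_p$: three distinct roots (unramified), a double and a simple root (one point of ramification index $2$), or a triple root (totally ramified).

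Second, I would match this pattern with the reduction type of $\E_p$. The fibre $\E_p$ is cut out in a weighted projective plane by $z^2 = g_p(w)$, so it is smooth, nodal, or cuspidal according to whether $g_p$ has three distinct roots, one double root together with a simple root, or a triple root, respectively. Since in characteristic zero the only singularities that can arise on a Weierstrass fibre are nodes (for multiplicative reduction, Kodaira type $I_n$) and cusps (for the additive Kodaira types), these three cases correspond exactly to good, multiplicative, and additive reduction. Combining the two steps yields the classification, and the "in particular" assertion is precisely the triple-root case.

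The step requiring the most care is the claim that the Kodaira type can be read off directly from $g_p$, because this presumes that the sextic presentation yields a Weierstrass model of $\E$ that is already minimal at every fibre. This is automatic here: passing to short Weierstrass form $z^2 = c_0(w^3 + A(s) w + B(s))$ via $w \mapsto w - c_2(s)/(3c_0)$ (valid because $\mathrm{char}\, k = 0$), the polynomials $A$ and $B$ have degrees $4$ and $6$, so $\Delta = -4A^3 - 27B^2$ has total degree $12$. Since $\E$ is a rational elliptic surface (Proposition \ref{del-pezzo}(a)) whose minimal discriminant must have degree equal to the Euler characteristic $12$, any non-minimality at a fibre would force this degree below $12$, a contradiction.
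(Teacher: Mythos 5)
Your proposal is correct, but it takes a more explicit route than the paper. The paper's proof never touches the sextic model: it uses Proposition \ref{del-pezzo}(b) to identify $C$ with the $2$-torsion multisection $\E[2] \bs \id_\E$ inside $\E$, and then observes that both the ramification profile of $f$ over $p$ and the reduction type of $\E_p$ are read off from the same count of $k^{\al}$-points of this multisection in the fibre over $p$, so the two classifications coincide. You instead identify $C$ with the locus $z=0$ in the weighted model, so that both invariants are governed by the root-multiplicity pattern of the single cubic $g_p(w)$; this is an explicit incarnation of the same correspondence (the points of $z=0$ in a fibre are exactly the nontrivial $2$-torsion of that fibre), and it buys you a concrete dictionary (three distinct roots / double root / triple root $\leftrightarrow$ unramified / simple ramification / total ramification $\leftrightarrow$ good / multiplicative / additive). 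Your final paragraph on minimality of the Weierstrass data is sound (the degree-$12$ homogeneous discriminant versus $\deg \Delta_{\min}=12$ for the rational elliptic surface), but it is not actually needed for the statement as the paper intends it: the fibre $\E_p$ of $\E=\Bl_O X$ is by construction the strict transform of the anticanonical curve $\{(x:y)=p\}\cap X$, which is isomorphic to the projective Weierstrass curve $z^2=g_p(w)$ because the base point $O$ is a smooth point of every member of the pencil, so nodal versus cuspidal (hence multiplicative versus additive) is read directly from that fibre without invoking Kodaira types of a minimal model. The one step you assert rather than argue is that the restriction of the elliptic fibration to $C$ is the coordinate projection $(x:y:w:0)\mapsto(x:y)$; this follows from the fact that the fibration of $\E$ is induced by the pencil $\abs{-\kappa_X}$, which in the weighted model is spanned by the degree-one coordinates $x,y$, and is worth a sentence.
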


\begin{proof}
 Since $f$ is defined over a number field and $\deg f = 3$ we can determine the set of ramification indices of points in the fibre over $p$ by counting the number of $k^{\al}$-points on a smooth model of $C$. We can also determine the reduction type of $\E_p$ by counting the number of $k^{\al}$-points in $(\E[2] \bs \id_\E)_p$. Since $C$ is isomorphic to $\E[2] \bs \id_\E$ as a subvariety of $\E$ over $\mbp_k^1$ and $C$ is smooth, these two quantities are equal for every $p \in \mbp^1(k^{\al})$.
\end{proof}

The advantage of using curves arising from del Pezzo surfaces is the fact that $\rank \Pic(X)(\mbq)$, and hence $\ranktwo J_C[2](\mbq)$, is easy to control. To be specific, if $X$ is the blow up of eight (rational) points on $\mbp^2$ in general position then $\Pic(X)$ is generated by the (rational) exceptional curves of the blowup and the canonical divisor. In light of Proposition \ref{del-pezzo}, this allows us to build trigonal genus 4 curves with large rational 2-torsion rank. Among these curves, we can easily identify those whose trigonal morphism has a totally ramified fibre.

\begin{corollary} \label{cor: fibres and ramification of branch curve}
	Let $X$ be a del Pezzo surface such that $\rank \Pic(X)(\mbq) = 9$ and let $C$ be the branch curve of the Bertini involution with trigonal morphism $f\: C \ra \mbp^1$. Write $X := \Bl_Z \mbp^2$ where $Z$ is a collection of $8$ rational points in general position. Then there is a one-to-one correspondence between totally ramified points of $f$ and cuspidal cubic plane curves through $Z$. 
\end{corollary}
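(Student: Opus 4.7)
The plan is to reduce the problem to identifying additive-reduction fibres of the elliptic surface $\E \to \mbp^1$ via Corollary \ref{reduction to ramification}, and then to use the general position hypothesis on $Z$ to show that the only additive fibres arise from cuspidal plane cubics.

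First I would invoke Proposition \ref{del-pezzo} to identify $\E$ as $\Bl_O X$, with the elliptic fibration given by the anti-canonical pencil: its fibres are the strict transforms of the members of $\abs{-\kappa_X}$. Since $X = \Bl_Z \mbp^2$, we have $-\kappa_X = 3H - \sum_{i=1}^8 E_i$ where $H$ is the pullback of the hyperplane class and the $E_i$ are the exceptional divisors over the points of $Z$. Pulling back through the blowdown $X \to \mbp^2$ therefore identifies members of $\abs{-\kappa_X}$ with plane cubics through $Z$. Distinct cubics clearly give distinct fibres, so by Corollary \ref{reduction to ramification}, it suffices to show that a member of the pencil gives an additive-reduction fibre of $\E$ if and only if the corresponding plane cubic is cuspidal.

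Next I would use the general position of $Z$ to restrict which plane cubics can occur in the pencil. No three of the $P_i$ are collinear, so each line through $\mbp^2$ contains at most two of them, and no conic contains more than five. Hence a cubic that splits as conic plus line meets $Z$ in at most $5+2=7<8$ points, and a cubic that splits as three lines meets $Z$ in at most $3\cdot 2=6<8$ points. Non-reduced cubics are excluded for the same reason. Thus every member of the pencil is an irreducible plane cubic, and is therefore smooth, nodal, or cuspidal.

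Finally I would match these possibilities with the Kodaira types of the fibres of $\E$. The general position hypothesis ensures each $\tilde D$ is smooth at every $P_i$, and the base point $O$ is a smooth point of each member of the pencil (the exceptional divisor $E_O$ is the identity section of $f$). Hence passing to the strict transform in $\E$ preserves the singularity type of $\tilde D$: smooth cubics produce smooth elliptic fibres, nodal cubics produce irreducible nodal fibres (Kodaira type $I_1$, multiplicative), and cuspidal cubics produce irreducible cuspidal fibres (Kodaira type $II$, additive). The chief obstacle is the possibility of other additive Kodaira types appearing, namely type $III$ (conic tangent to a line) or type $IV$ (three concurrent lines); both are reducible cubic configurations that the general position hypothesis has already excluded. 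Consequently the additive fibres of $\E$ are in bijection with cuspidal plane cubics through $Z$, and the corollary follows.
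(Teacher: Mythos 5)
Your proof is correct and follows essentially the same route as the paper: both reduce, via Corollary \ref{reduction to ramification}, to matching additive fibres of $\E$ with members of the anticanonical pencil (that is, plane cubics through $Z$), and both use the general position hypothesis together with the smoothness of each member at the ninth base point $O$ to see that singularity types survive the passage to strict transforms. The only real difference is that you classify the whole pencil and explicitly rule out reducible cubics (hence Kodaira types $III$, $IV$, and the other reducible additive types), a point the paper leaves implicit when it asserts that a totally ramified point is the cusp of an additive fibre.
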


\begin{proof}
	Let $Y$ be a cuspidal cubic passing through $Z$, let $u$ be a cubic form defining $Y$, and let $P \in Y$ be the cusp. Note the criterion that $Z$ be in general position forces $P \not \in Z$ (points of $Z$ being in general position, there is no cubic passing through the points of $Z$ with a \newtext{singularity} at one of them). 
	
	The eight points of $Z$ determine a pencil of cubic forms $\set{\lambda u + \mu v : (\mu : \lambda) \in \mbp^1}$. Such a pencil has a base locus which consists of 9 points, these being the points of $Z$ as well as an additional point $O$. Intersection multiplicities ensure that $O$ is not a cusp or node of any of the curves in the pencil.

	It follows that the rational map $g(x,y,z) \mapsto (u(x,y,z):v(x,y,z))$ is defined outside of $Z \cup O$, so by the universal property of blowing up there is a morphism $f$ such that the diagram
	\[ 
	\xym{
		& \Bl_{Z \cup O} \mbp^2 \ar[d]^f \ar[dl]_{\phi} \\
		\mbp^2 \ar@{-->}[r]^g & \mbp^1
	}
	\]
	commutes, where $\phi$ is the morphism which is the inverse (as a birational map) to the blow-up map. We have that $\E := \Bl_{Z \cup O} \mbp^2$ is an elliptic surface over $\mbp^1$ whose identity section is the exceptional curve lying over $O$. Because $Y$ is precisely the closure of the locus $\set{Q \in \mbp^2 : g(Q) = (0:1)}$, the strict transform of $Y$ is in fact a fibre of $f$. Since $P \not \in Z \cup O$ we have that the strict transform of $Y$ remains singular (with a cusp) in $\E$. Thus the fibre over $(0:1)$ is a special fibre of $f \: \E \ra \mbp^1$. We see that $\E$ has additive reduction at $(0:1)$, so by Corollary \ref{reduction to ramification} corresponds to a totally ramified point of $f \: C \ra \mbp^1$.
	
	Conversely, if $Q$ is a totally ramified point of $f \: C \ra \mbp^1$ then Corollary \ref{reduction to ramification} shows it is the cusp of a special fibre $\E_p$ whose reduction type is additive. In particular $Q$ does not lie on $\phi^{-1}(Z \cup O)$ so the blow-down of $\E_p$ is a cuspidal plane curve. That it is cubic follows from the fact that every fibre of $\E$ blows down to a cubic curve. 
\end{proof}

We present here for the convenience of the reader the procedure given in \cite{zar08} to get explicit equations for the branch curve. Note that in our example $\mbp^2$ and the del Pezzo surface $X$ are birational over $\mbq$, so we may identify their function fields. We have that $\kappa_X = E_1 + \ldots + E_8 - 3H$ is a canonical divisor for $X$, where $H$ is the pullback of the hyperplane class on $\mbp^2$ and the $E_i$ are the 8 pairwise orthogonal exceptional curves lying over the blown up points of $\mbp^2$. Thus,
	\[
		\newtext{\riemannrochspace{-\kappa_X}} = \gen{u,v}
	\]
where $u,v \in \mbq[x,y,z]$ are cubic forms passing through the 8 base-points of the blow-up. \newtext{Similarly, we have
	\[
		\newtext{\riemannrochspace{-2\kappa_X}} = \gen{u^2,uv,v^2,w}		
	\]
with $w$ a function on $X$ not in the $k$-span of $\set{u^2,uv,v^2}$}. \newtext{We have that $\riemannrochspace{-2\kappa_X}$ } defines a 2-to-1 rational map $\varphi\: \mbp^2 \ra \mbp(1:1:2)$ via
	\[
		\varphi \: (x:y:z) \mapsto (u(x,y,z):v(x,y,z):w(x,y,z))
	\]	
with a unique base-point that is $O$. In fact, if 
	\[
	X \: z^2 = c_0 w^3 + c_2(u,v) w^2 + c_4(u,v) w + c_6(u,v)
	\]
is the model of $X$ in $\mbp(1:1:2:3)$ provided by \newtext{$\riemannrochspace{-3\kappa_X}$} and $\pi\: X \ra \mbp(1:1:2)$ is the natural projection, then $\pi$ and $\varphi$ agree on $X \bs \set{O}$. Since $\pi$, and therefore $\varphi$, is branched along $C$ we can recover a model of the branch curve from a Jacobi criterion. That is,
	\[
	C = V(\newtext{F}) \ssq \mbp^2 \midskip \text{where } \newtext{F} := \det
	\bbm
	u_x & v_x & w_x \\
	u_y & v_y & w_y \\
	u_z & v_z & w_z
	\ebm .
	\] 
In general we have that $C$ is a degree 9 plane curve with order 3 singularities at each of the eight base-points of the blow up (See \cite[Section 5]{zar08}). 

\begin{remark}
Another way to see the eight base-points of the blowup correspond to singular points of the model is as follows. Let $P_1, \ldots, P_8$ be the base-points of the blow up and let $E_1, \ldots, E_8$ be the corresponding exceptional curves lying over these points. For clarity, we denote by $C$ the model of the branch curve on $\mbp^2$ and denote by $C'$ the model of the branch curve on $X$. The strict transform of the blow up of $C$ at the 8 base-points is $C'$. Each exceptional curve $E_i$ of $X$ corresponds to a class of $\Pic X$ that restricts to an odd theta characteristic on $C'$ (by Proposition \ref{del-pezzo}). As a divisor, $E_i$ is effective, so it will restrict to an effective (degree 3) divisor of $C'$. However, each $E_i$ intersects $C'$ in three points (counting multiplicity) which lie over a single point $P_i$ of $C$. We see that $C$ has a singularity of order 3 at $P_i$. 

In other words, once we have computed a model of the branch curve on $\mbp^2$ we can immediately identify effective representatives of 8 odd theta characteristics of $C'$. Additionally, the anti-canonical divisor of $X$ provides us with an even theta characteristic of $C$ by Proposition \ref{del-pezzo}. 
\end{remark}

\begin{example} \label{ex:main example}
	Let $X$ be the del Pezzo surface defined by blowing up $\mbp^2$ at the points
	\begin{gather*}
		(0:-2:1),(3:-9:1),(3:7:1),(8:26:1), \\
		(15:63:1),(24:124:1),(48:342:1),(0:0:1).
	\end{gather*}
	We choose $u,v \in \mbq[x,y,z]$ to be two independent cubic forms vanishing at all 8 points. For convenience, we choose $u$ to be the form $(x+z)^3-(y+z)^2z$. We let $w$ be a sextic form with double roots at each \newtext{of the points listed above; we further require that $w$ is not in the span of $\set{u^2,uv,v^2}$. Note that $\riemannrochspace{-2\kappa_X}$ is spanned by $u^2,uv,v^2,w$. In our computations in \cite{kulkarni2016magmascript} we also make explicit choices for $v$ and $w$, but it is not particularly helpful to highlight them.} Thus the zero-locus of 
		\[
			\newtext{F} := \det
				\bbm
				u_x & v_x & w_x \\
				u_y & v_y & w_y \\
				u_z & v_z & w_z
				\ebm 
		\]
	is a (singular) model of the branch curve of $X$. With $\varphi(x,y,z) := (u:v:w)$ as before, we let $t := u/v$ and $W := w$. Using the MAGMA \cite{MAGMA} computer algebra package we can compute the image of $C$ under $\varphi$ and write the defining equation in terms of $t,W$. We have that
	\begin{alignat*}{4}
		C\:0=&	  	&&23200074887895098984232713028 \ t^6 - 2457892462046662336694429 \ t^5 + \\
		&	  	&&1338378986926042827721/16 \ t^4 - 9000960055643209/8 \ t^3 + 			\\
		&	  	&&158059424789/16 \ t^2 + 11025 \ t										\\
		&+W     &&(24403582287284966245 \ t^4 - 13786310912398097/8 \ t^3 +	 			\\
		&	  	&& \ \ 234505995159/8 \ t^2 - 316801/4 \ t)									\\
		&+W^2   &&(136902207241/16 \ t^2 - 1208223/4 \ t)								\\
		&+W^3 	&&
	\end{alignat*}
	is an affine model for the branch curve of $X$. \newtext{The morphism induced by the projection $(t,W) \mapsto t$ is the degree $3$ morphism $f\: C \ra \mbp_{k}^1$ from Proposition \ref{del-pezzo}(b). Our choice of $u$, via Corollary \ref{cor: fibres and ramification of branch curve}, ensures that} there is a totally ramified fibre at $t=0$. \newtext{We may view the fibres of $f$ as effective degree $3$ divisors of $C$. By the discussion above and Proposition \ref{del-pezzo}(d), we see that the divisor class of the fibre at $t=0$ (and hence any fibre of $f$) is an even theta characteristic.} We use the representatives of the theta characteristics to compute data presented in Example \ref{ex:main example 2}.
\end{example}

\section{Recovering large class groups from curves with large rational 2-torsion}

We fix some notation to be used for the remainder of this section. Using the procedure in the previous section we may choose a trigonal morphism $f \: C \ra \mbp_\mbq^1$ with $C$ a genus 4 curve such that $\ranktwo J_C[2](\mbq) = 8$ and let $P_0$ be a rational totally ramified point of $f$. Denote $p_0 := f(P_0)$. Let $S$ be the places of bad reduction for $C/\mbq$ as well as the archimedean places and places dividing $2$.

Conveniently, the Abel-Jacobi map with base-point $P_0$ provides an embedding of $C$ into its Jacobian. This map, which we denote by $j$, is defined over $\mbq$ and is given by $j(Q) := [Q-P_0]$. In particular, $j(P_0)$ is the identity point of $J_C$. 

\newtext{Corollary 2.11 of \cite{gile12} already allows us to conclude for these curves that for all but $O(\sqrt{N})$ of the $t$ in $\{1, \ldots, N \}$ that $[\mbq(P_t) : \mbq] = 3$ and
	\[
		\ranktwo \Cl(\mbq(P_t)) \geq 8 + \#S - \rank \mbo_{\mbq(P_t),S}^*
	\]
where $P_t := f^{-1}(t)$,} $S$ is the set of bad primes for $C$ together with the archimedean primes and those dividing $2$, \newtext{and $\mbo_{\mbq(P_t),S}^*$ is the group of $S$-units in the number field $\mbq(P_t)$}. By applying the ideas of \cite{mes83} to the curves arising from our construction it is possible to avoid the penalty on the bound introduced by $S$-units. The overarching idea of the method we use is to directly exhibit a subfield of the Hilbert class field of $\mbq(P)$, where $[\mbq(P):\mbq]=3$ and $f(P) \in \mbp^1(\mbq)$, for $P \in C(\mbq^\al)$ which lie in fibres that satisfy a local condition at finitely many places, thereby demonstrating that the class groups of the fields associated to these fibres have large 2-rank. Bilu and Gillibert have provided a generalized description of this framework in \cite{bilu2016chevalley}. Nevertheless, we provide proofs as our computations closely mirror the arguments.

The main idea introduced by \cite{mes83} is to use the fact that the fibres of the multiplication-by-$2$ morphism of the Jacobian of a curve $C$ are precisely described. Specifically:

\begin{lemma} \label{lem:extension is Galois}
Let $K$ be a number field, let $\Spec K \in J_C(K)$ be a closed point of degree $[K:\mbq]$ on $J_C$ and let $\Spec L$ be its inverse image under $[2]\: J_C \ra J_C$. If $J_C[2](\mbq) \iso (\mbz/2)^{2g}$, then $L/K$ is Galois with Galois group $(\mbz/2)^{2g}$ and \newtext{there is a fixed finite set of places $S$ of $K$, independent of $L$, such that} $L/K$ unramified outside of $S$.
\end{lemma}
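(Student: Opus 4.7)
The plan is to exhibit $\Spec L \to \Spec K$ as the pullback of the $J_C[2]$-torsor $[2]\: J_C \to J_C$, and then to read off both the Galois structure and the unramification statement from the group-scheme side. Since $K$ has characteristic $0$ the isogeny $[2]$ is separable, hence finite étale of degree $2^{2g}$; and since $J_C[2]$ acts freely on $J_C$ by translation with quotient $[2]$, the morphism $[2]$ is in fact a $J_C[2]$-torsor in the étale topology. Pulling back along the section $x\: \Spec K \to J_C$ whose image is the given closed point yields the étale $J_C[2]$-torsor $\Spec L \to \Spec K$. The hypothesis $J_C[2](\mbq) \iso (\mbz/2)^{2g}$ forces every geometric $2$-torsion point to be $\mbq$-rational, so $J_C[2]$ is the constant group scheme $(\mbz/2)^{2g}$ over $K$. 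An étale torsor for a constant finite group $G$ is by definition a $G$-Galois étale $K$-algebra, which yields the first assertion.

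For unramification, I would let $S$ consist of the archimedean places of $K$, the places above $2$, and the places of $K$ lying above the primes of $\mbq$ of bad reduction for $C$; this set depends on $C$ and $K$ alone, not on the choice of $x$. Writing $U := \Spec \mathcal{O}_K \setminus S$, the curve $C$ has good reduction over $U$, so its Jacobian spreads out to an abelian scheme $\mathcal{J} \to U$. Because $2$ is invertible on $U$, the map $[2]\: \mathcal{J} \to \mathcal{J}$ is finite étale. The $K$-point $x$ extends uniquely to a section $\bar x\: U \to \mathcal{J}$ by the Néron mapping property (the Néron model coincides with $\mathcal{J}$ over $U$). Pulling back $[2]$ along $\bar x$ produces a finite étale cover of $U$ whose generic fibre recovers $\Spec L \to \Spec K$, so $L/K$ is unramified outside $S$.

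The main obstacle is interpretive rather than technical. The phrase ``Galois group $(\mbz/2)^{2g}$'' must be read in the Galois-algebra sense: $L$ need not be a field (for instance, above the identity of $J_C$ the fibre is $J_C[2] \iso K^{2^{2g}}$), and in general $L = \prod L_i$ with $\Gal(L_i/K)$ naturally identified with the image of the homomorphism $\Gal(\ol{K}/K) \to J_C[2](\mbq)$ recording the translation by which $\sigma$ permutes a chosen geometric preimage. Once the torsor viewpoint is adopted and a good integral model is fixed, both conclusions reduce to standard properties of étale morphisms between abelian schemes in characteristic zero.
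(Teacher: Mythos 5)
Your argument is correct, but it follows a genuinely different route from the paper, which in fact states this lemma without proof: the Galois assertion is implicit in the descent formalism the paper sets up afterwards (for a preimage $y$ of the given point, $\sigma \mapsto \sigma y - y$ is a homomorphism $\Gal(\ol{K}/K) \to J_C[2]$ landing in the \emph{rational} group $(\mbz/2)^{2g}$, which cuts out $K(y)/K$ as an abelian exponent-$2$ extension), and the unramification outside a fixed finite set is delegated to the citation \cite[Proposition C.1.5]{sh00} (the paper's Proposition on good places). Your torsor-theoretic packaging --- $[2]\: J_C \to J_C$ is an \'etale torsor under the constant group scheme $(\mbz/2)^{2g}$, pull back along the point to get a Galois algebra, then spread out over $U = \Spec \mathcal{O}_K \setminus S$ using good reduction, invertibility of $2$, and the N\'eron mapping property to see the pullback is finite \'etale over $U$ --- is a self-contained geometric proof of both halves at once, and it makes transparent that $S$ depends only on $C$ and $K$ and not on the chosen point. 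What it does not give (and what the paper's Kummer-pairing framework is built for) is the finer control of ramification \emph{at} the bad places, which is the content of the subsequent Proposition on bad places; so the paper's choice of formalism is dictated by those later needs rather than by this lemma. Your interpretive caveat is also the right one: $L$ need not be a field, so the lemma should be read in the Galois-algebra (torsor) sense; in the paper's application the field case is what actually gets used, and there irreducibility of $[2]^{-1}(P_t)$ is secured separately via Hilbert irreducibility in the proof of the main corollary, so nothing is lost.
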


In fact, we can say even more; the ramification at the bad places is explicitly described by the descent method (as we explain below). 

\begin{proposition}[\protect{\cite[Proposition C.1.5]{sh00}}] \label{good places}
Let $k$ be a number field and $\wtilde S$ be the finite set of places of $k$ of bad reduction for $C$, as well as primes dividing $2$. Then for any $y \in J_C(k^{\al})$ such that $[2]y \in J_C(k)$ we have that $k(y)/k$ is unramified outside of $\wtilde S$.
\end{proposition}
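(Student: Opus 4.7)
The plan is to verify, for each place $v$ of $k$ with $v \notin \wtilde S$, that $k_v(y)/k_v$ is unramified; fix such a $v$, let $x := [2]y \in J_C(k)$, and write $\mbo_v$ for the ring of integers of the completion $k_v$. Since $v \notin \wtilde S$, the Jacobian $J_C$ has good reduction at $v$ and $v \nmid 2$, so the N\'eron model $\mathcal{J}$ of $J_C$ over $\Spec \mbo_v$ is an abelian scheme.

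The essential ingredient is that the isogeny $[2]\: \mathcal{J} \ra \mathcal{J}$ is finite \'etale over $\Spec \mbo_v$: it is a finite flat isogeny of abelian schemes of degree $2^{2g}$, and because the residue characteristic of $v$ does not divide $2$, the differential of $[2]$ at the identity section (which is multiplication by $2$ on the tangent space) is an isomorphism. Equivalently, the kernel $\mathcal{J}[2]$ is an \'etale finite group scheme over $\mbo_v$, so $[2]$ itself is \'etale.

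By the N\'eron mapping property, $x \in J_C(k) \subseteq J_C(k_v)$ extends uniquely to a section $\bar x \in \mathcal{J}(\mbo_v)$, and the fibre product $T := \mathcal{J} \times_{[2],\mathcal{J},\bar x} \Spec \mbo_v$ is then a finite \'etale $\mbo_v$-scheme. The point $y$ corresponds to a $k_v^{\al}$-point of $T$ lying over the generic fibre, and since every connected component of a finite \'etale $\mbo_v$-scheme is the spectrum of an unramified extension of $\mbo_v$, the residue field of the component containing $y$ is an unramified extension of $k_v$. Thus $k_v(y)/k_v$, and therefore $k(y)/k$ at $v$, is unramified.

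The only non-routine step is the appeal to the \'etaleness of $[2]$ in residue characteristic coprime to $2$; this is standard for abelian schemes and is precisely the reason the primes above $2$ must be included in $\wtilde S$. Everything else reduces to the Neron mapping property together with the elementary fact that points of a finite \'etale scheme over a henselian discrete valuation ring are defined over unramified extensions.
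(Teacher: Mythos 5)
Your proof is correct, but it takes a different route from the one the paper relies on: the paper offers no argument of its own here, deferring entirely to \cite[Proposition C.1.5]{sh00}, where the proof is the classical inertia computation. Namely, for $\sigma$ in the inertia group at a place $v \notin \wtilde S$, one notes $\sigma y - y \in J_C[2]$ because $[2]y$ is $k$-rational, and since $J_C$ has good reduction at $v$ and the residue characteristic is odd, reduction modulo $v$ is injective on $2$-torsion over the maximal unramified extension; inertia acts trivially on the special fibre, so $\sigma y - y$ reduces to $0$ and hence vanishes, making $y$ inertia-invariant. Your N\'eron-model argument packages the same content scheme-theoretically: the \'etaleness of $[2]$ on the abelian scheme over $\mbo_v$ is the counterpart of the injectivity of reduction on prime-to-$p$ torsion, and the structure of finite \'etale schemes over a henselian discrete valuation ring replaces the explicit action of inertia. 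What your version buys is generality and conceptual economy (it applies verbatim to any isogeny that is \'etale over $\mbo_v$, and to arbitrary degree); what the classical version buys is that it needs no N\'eron models or abelian schemes, only reduction of torsion points, and it is exactly the inertia computation that the paper reuses, via the Kummer pairing, at the bad places in the proof of Proposition \ref{bad places}. One minor remark: like the cited statement, your argument addresses only the finite places outside $\wtilde S$ (as it must, since you invoke residue characteristic and $\mbo_v$); this matches the intended reading of ``unramified outside $\wtilde S$'', with the archimedean places treated separately later in the paper.
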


We now explain the computations involved at the bad places. Let $A := J_C$ be the Jacobian of a curve of genus $g$ and recall that there is a pairing 
\[
\gen{\cdot,\cdot} \: \frac{A(k)}{2} \times A[2](k) \ra k^*/k^{*2}
\]
induced from the Weil pairing \cite[Section 2.2]{schaefer1998computing}. This pairing is explicitly computable given representatives for a basis $T_1,\ldots,T_{2g}$ of $A[2]$ as follows. As each $T_i$ is 2-torsion we have that there is a rational function $h_i$ such that $\div h_i = 2T_i$. For $[D] = \left[ \sum_{P \in C(k^{\al})} n_P P \right] \in A(k)/2$, where $n_P$ is zero for points of $C$ which occur in the support of $T_i$ (i.e the zeros and poles of $h_i$) we can define
\[
\left\langle \left[ \sum_{P \in C(k^{\al})} n_P P \right], [T_i] \right\rangle := \prod_{P \in C(k^{\al})} h_i(P)^{n_P}.
\]
Note that while the value of each $h_i(P)$ lies in $k^{\al}$ the product lies in $k^*$ for any $[D] \in \Pic^0(C)(k)$. We also note that every divisor class $[D]$ has a representative of the form $\sum_{P \in C(k^{\al})} n_P P$ where $n_P$ is zero for points of $C$ which occur in the support of $T_i$ \cite[Lemma A.2.3.1]{sh00}. The value of $\prod_{P \in C(k^{\al})} h_i(P)^{n_P}$ depends on the chosen representatives for the $2$-torsion and the representative of $[D]$ in $A(k)/2$. However, the square class of this value is independent of these choices so the pairing is well defined \cite[Lemma 2.1]{schaefer1998computing}. 

 We are ready to state the well-known companion result to Proposition \ref{good places} for the bad places. We provide a proof lacking an immediate reference.
\begin{proposition} \label{bad places}
	Let $C$ be a curve of genus $g$ defined over $\mbq$ with $\rank_2 J_C(\mbq)[2] = 2g$. Let $k$ be a number field, let $A := J_C$, let $\set{T_1, \ldots, T_{2g}}$ be a basis for $A[2](\mbq)$, and let $h_1, \ldots, h_{2g}$ be the maps $h_i \: A(k)/2 \ra k^*/k^{*2}$ defined from the Kummer pairing. Let $x \in A(k)$ and choose $y \in A(k^{\al})$ such that $[2]y = x$. Then $k(y)/k$ is unramified if both of the following hold:
	\begin{enumerate}[(a)]
		\item $\ord_\nu h_i(x) \equiv 0 \pmod 2$ for each $i$ and each finite place $\nu$ of $k$.
		\item Each $h_i(x)$ has a positive representative modulo $k^{*2}$ at all real places.
	\end{enumerate} 
\end{proposition}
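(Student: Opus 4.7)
The plan is to use Kummer theory on $A = J_C$ to identify $k(y)$ as an explicit multi-quadratic extension of $k$, and then to verify its unramifiedness place by place using conditions (a) and (b).

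First, the short exact sequence of Galois modules $0 \to A[2] \to A \xrightarrow{[2]} A \to 0$ yields a connecting homomorphism $\delta \colon A(k)/2A(k) \to H^1(k, A[2])$ sending $x$ to the class of the cocycle $\sigma \mapsto y^\sigma - y$; the field $k(y)$ is the fixed field of the kernel of the cocycle $\delta(x)$. Because the hypothesis $\ranktwo J_C(\mbq)[2] = 2g$ forces $A[2](\mbq) \iso (\mbz/2)^{2g}$ to exhaust all of $A[2]$, and $\mbq \ssq k$, the Galois action on $A[2]$ is trivial. Combining the basis $T_1,\ldots,T_{2g}$ with the standard Kummer isomorphism $H^1(k,\mu_2) \iso k^*/k^{*2}$ then gives an identification
\[
H^1(k, A[2]) \iso (k^*/k^{*2})^{2g}
\]
whose coordinates on $\delta(x)$ are precisely the values $h_i(x)$ (this is the Weil-pairing description of $2$-descent recalled before the statement). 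Consequently
\[
k(y) = k\bigl(\sqrt{h_1(x)}, \ldots, \sqrt{h_{2g}(x)}\bigr),
\]
and $k(y)/k$ is unramified at a place $\nu$ if and only if each quadratic factor $k(\sqrt{h_i(x)})/k$ is unramified at $\nu$.

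I would then verify unramifiedness of each factor locally. At a finite place $\nu \nmid 2$, a direct analysis of the local conductor shows that $k(\sqrt{a})/k$ is unramified iff $a \in \mbo_\nu^* \cdot k_\nu^{*2}$, equivalently $\ord_\nu(a) \equiv 0 \pmod{2}$, which is exactly condition (a). At a real archimedean place $\nu$, the completion $k_\nu(\sqrt{a})$ equals $k_\nu$---so $\nu$ does not become complex, the archimedean sense of unramifiedness---iff $a$ admits a positive representative there, giving condition (b). Taking compositum over $i$ yields the desired unramifiedness at every finite place not above $2$ and every real place.

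The main obstacle is the finite places $\nu \mid 2$, where an even valuation does not on its own guarantee that $k_\nu(\sqrt{a})/k_\nu$ is unramified (for example $\mbq_2(\sqrt{3})/\mbq_2$ is ramified although $\ord_2(3) = 0$). To close the argument there, I would combine condition (a) with the constraints on $\delta(x)$ already furnished by Proposition \ref{good places} and with the rigidity imposed by the rationality $A[2] \ssq A(\mbq)$: the goal is to show that the local Kummer image in $H^1(k_\nu, A[2])$ lies in the unramified subgroup at each $\nu \mid 2$, so that the cocycle factors through the maximal unramified quotient of $\Gal(\ol{k}_\nu/k_\nu)$ and the corresponding quadratic factors are locally unramified. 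This $2$-adic local refinement is where I expect the bulk of the technical work to lie.
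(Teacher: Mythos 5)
Your overall route is the same as the paper's: the paper also reduces to the finite set $\wtilde S$ via Proposition \ref{good places}, identifies the action of inertia on $y$ through the pairing formula $\gen{[\sigma y-y],T_i}=h_i(y)^\sigma/h_i(y)$ with $h_i(y)^2\equiv h_i(x)\pmod{k^{*2}}$, and uses the triviality of the left kernel to conclude that $I_\nu$ fixes $y$ as soon as it fixes every $h_i(y)$ --- which is exactly your statement that $k(y)$ is the multiquadratic extension generated by the $\sqrt{h_i(x)}$. Your handling of the odd finite places (even valuation) and of the real places (positivity) coincides with the paper's.

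The point of divergence is the wild places, and here you should know that the paper contains no ``$2$-adic local refinement'' for you to have missed: its proof disposes of every finite $\nu\in\wtilde S$, including $\nu\mid 2$, with the single inference ``$\ord_\nu h_i(y)\in\mbz$, so inertia acts trivially on $h_i(y)$'', which is precisely the step your $\mbq_2(\sqrt{3})/\mbq_2$ example refutes in residue characteristic $2$. Moreover, conditions (a) and (b) alone genuinely do not control ramification above $2$ over a general number field $k$: for instance $a=2+\sqrt{3}$ is a totally positive unit of $k=\mbq(\sqrt{3})$ (so it satisfies both (a) and (b)), yet $k(\sqrt{a})=k(\sqrt{2})$ is ramified at the place above $2$. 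So your proposal is incomplete exactly where you say it is, but the missing piece cannot be supplied from the hypotheses as stated; the statement needs either a stronger local condition at $\nu\mid 2$ (e.g.\ that each $h_i(x)$ is a square in $k_\nu^*$) or the weaker conclusion ``unramified outside $2$''. In the paper's application this is what the very small constant at $2$ (the entry $-\log_2\ell_2=33$ in Example \ref{ex:main example 2}, via Lemma \ref{lem:ram near p}) is implicitly there to achieve: it pins the values $h_i([P_t-P_0])$ so close to $1$ that they are local squares above $2$, a condition strictly stronger than the parity condition (a), and that is what actually kills the ramification at $2$ in Corollary \ref{cor: main result}.
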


\begin{proof}
	\newtext{Let $S$ be the set of places of $\mbq$ of bad reduction for $C$ together with the archimedean places and places dividing 2, and let $\wtilde S$ be the set of places of $k$ over places in $S$.} Proposition \ref{good places} shows that it suffices to consider only those places contained in $\wtilde S$. Let $\nu \in \wtilde S$ be a finite place and let $\sigma \in I_\nu$ be an element in the inertia subgroup of $\Gal(k^{\al}/k)$. Then we need to show that $[\sigma y - y] = [0]$. Observe that by definition of $y$ we have $[\sigma y - y] \in A[2]$. Now
		\[
			\gen{[\sigma y - y], T_i} = \frac{h_i(y)^\sigma}{h_i(y)}
		\]
	Note again since $[2]y = x$ that $h_i(y)^2 = h_i(x) \pmod {k^{*2}}$. Thus $\ord_\nu h_i(y) \in \mbz$ so the inertia group acts trivially on $h_i(y)$. In particular each $\gen{[\sigma y - y], T_i}$ is trivial.
	
	For archimedean places $\nu$ of $\wtilde S$, we have that the inertia group $I_\nu$ is trivial if $\nu$ a complex place. If $\nu$ is a real place then we have assumed each $h_i(x)$ has a positive representative at $\nu$, so 
		\[
		\gen{[\sigma y - y], T_i} = \frac{h_i(y)^\sigma}{h_i(y)}
		\]
	is always trivial for $\sigma \in I_\nu$. Since the Kummer pairing has trivial left kernel we see that $I_\nu$ acts trivially.
\end{proof}

In what follows we devote ourselves to ensuring the conditions of Proposition \ref{bad places} are met. We will need to apply Proposition \ref{bad places} to points on $C$ defined over different cubic number fields. As these number fields vary, so too does the set of places $\wtilde S$ extending the places of $\mbq$ of bad reduction. We will use the totally ramified point of $f\:C \ra \mbp^1$ to accommodate for this variation.

Fix a basis $T_1,\ldots,T_8$ for $J_C[2]$ and rational functions $h_1,\ldots,h_8$ corresponding to the Kummer pairing. We note that once we find divisors representing $T_1,\ldots,T_8$, we can write down each $h_i$ explicitly, and up to changing the divisors representing the $T_i$ we can assume that each $h_i$ is regular at $P_0$.

\newtext{At this point it is helpful to let $t \in k(C)$ be the function field element corresponding to $f$. Note that if $C$ is the curve from Example \ref{ex:main example} then this assignment agrees with the assignment of $t$ from that example.} Let $\alpha$ be a uniformizing element for $\mbo_{C,P_0}$. Then the image of $P \mapsto (t(P), \alpha(P))$ is an affine plane model of $C$ which is non-singular at $P_0$. For convenience we will denote
	\[
		\norm{(x_1,y_1) - (x_2,y_2)}_w := \max \set{\abs{x_1-x_2}_w, \abs{y_1-y_2}_w}
	\]
for a place $w$ of a number field $K$ and points $(x_1,y_1), (x_2,y_2) \in \mba^2(K)$. 

\begin{lemma} \label{lem:ram near p}
	Let $\nu \in S$ be a finite place. Then there is a constant $\ell_\nu$ depending only on $f$, $\nu$, $P_0$, a uniformizer $\alpha \in \mbo_{C,P_0}$, and the associated affine model of $C$ such that the following statement holds:
		
		If $q \in \mbp^1(\mbq)$ satisfies $\abs{q-p_0}_\nu < \ell_\nu$, then $\abs{h_i([Q-P_0]) - 1}_w < 1$ for every $Q \in f^{-1}(q)(\mbq^{\al})$, for every $i \in \set{1, \ldots, 8}$, and for each $w$ a place of $\mbq(Q)$ extending $\nu$. In particular, if $w$ is a real place, then $h_i([Q-P_0])$ is positive.
\end{lemma}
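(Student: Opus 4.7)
The plan is to express $h_i([Q - P_0])$ concretely as a ratio of values of $h_i$ near $P_0$, and then invoke continuity of the regular function $h_i$ at $P_0$. Since each $h_i$ was chosen to be regular at $P_0$ and $\div(h_i) = 2T_i$, we have $P_0 \notin \supp(T_i)$ and $h_i(P_0) \in \mbq^*$. For $Q$ sufficiently close to $P_0$ on the affine model, $Q$ also avoids $\supp(T_i)$, so $Q - P_0$ is a legitimate representative of its class for the Kummer pairing formula, yielding
\[
h_i([Q - P_0]) = h_i(Q)\, h_i(P_0)^{-1}.
\]
The rest of the proof is reduced to bounding this ratio $w$-adically.

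The key geometric input is that $f$ is totally ramified at $P_0$: this gives a factorization $t - p_0 = \alpha^3 u$ inside $\mbo_{C, P_0}$ for some unit $u$. Given any $Q \in f^{-1}(q)(\mbq^{\al})$, the coordinate $\alpha(Q)$ then satisfies $\alpha(Q)^3\, u(\alpha(Q)) = q - p_0$. A Newton-polygon argument applied to this cubic relation produces a constant $M_\nu$, depending only on the power series expansion of $u$ at $P_0$ and on $\nu$, such that whenever $|q - p_0|_\nu < M_\nu$ one has
\[
|\alpha(Q)|_w \leq |q - p_0|_\nu^{1/3}\, |u(P_0)|_w^{-1/3}
\]
for every place $w$ of $\mbq(Q)$ extending $\nu$. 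This is the step I expect to be the main obstacle, since the bound has to hold uniformly across all extensions $\mbq(Q)/\mbq$ that can appear in fibres of $f$; the uniformity comes from the fact that the factorization $t - p_0 = \alpha^3 u$ lives in the geometric local ring of $C$ at $P_0$ and is independent of $Q$.

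With $\alpha(Q)$ controlled, the conclusion follows from Taylor-expanding each $h_i$ in the completion of $\mbo_{C, P_0}$ at $\alpha$: we may write $h_i = h_i(P_0) + \sum_{j \geq 1} c_{i,j}\, \alpha^j$ with fixed coefficients $c_{i,j} \in \mbq^{\al}$ intrinsic to $h_i$, so
\[
\frac{h_i(Q)}{h_i(P_0)} = 1 + \sum_{j \geq 1} \frac{c_{i,j}}{h_i(P_0)}\, \alpha(Q)^j.
\]
Shrinking $\ell_\nu$ below $M_\nu$ enough to force $|\alpha(Q)|_w$ smaller than the $w$-adic radius of convergence of this series and than $|h_i(P_0)/c_{i,j}|_w^{1/j}$ for the finitely many $i$ and for all $j$ simultaneously (noting the coefficients are finite in number up to shrinkage because of the uniform geometric bound) gives $|h_i(Q)/h_i(P_0) - 1|_w < 1$. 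The positivity assertion at real places is then immediate: any real $x$ with $|x - 1| < 1$ lies in the interval $(0, 2)$.
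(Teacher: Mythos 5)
Your overall strategy is the same as the paper's (reduce to showing every point of the fibre $f^{-1}(q)$ is $w$-adically close to $P_0$, then conclude by continuity of the $h_i$ at $P_0$), but the step you yourself flag as the main obstacle is exactly where the argument breaks, and as written it is circular. The factorization $t-p_0=\alpha^3u$ with $u$ a unit lives in $\mbo_{C,P_0}$ (or its completion $\cong$ a power-series ring in $\alpha$). To write $\alpha(Q)^3\,u(\alpha(Q))=q-p_0$ and then say $\abs{u(\alpha(Q))}_w\approx\abs{u(P_0)}_w$, you must evaluate the power series $u$ at $\alpha(Q)$, which is only legitimate when $Q$ already lies in the $w$-adic residue disc of $P_0$ (equivalently, $\abs{\alpha(Q)}_w$ is already known to be small) --- but that is precisely what you are trying to prove. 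Viewing $u$ instead as a global rational function does not help: at an arbitrary point $Q$ of a nearby fibre you have no control on $\abs{u(Q)}_w$ (it may even have zeros or poles there). Moreover, a Newton-polygon argument needs a polynomial whose roots account for the \emph{entire} fibre; the local identity $\alpha^3u(\alpha)=q-p_0$ at $P_0$ is a power-series relation and can only ever see points inside the disc of convergence, so it cannot rule out fibre points with $\abs{\alpha(Q)}_w$ large. Independence of the factorization from $Q$ is not the issue; locality is.

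The correct way to complete your idea --- and the route the paper takes --- is to make the relation global: $\alpha$ satisfies a cubic over $\mbq(t)$, i.e.\ the affine plane model $W^3+a(t)W^2+b(t)W+c(t)=0$ given by $P\mapsto(t(P),\alpha(P))$, whose three roots over $t=q$ are exactly the $\alpha$-coordinates of \emph{all} points of $f^{-1}(q)$. Total ramification at $P_0$ together with $\deg f=3$ means the whole fibre over $p_0$ is $P_0$, which forces $a,b,c$ to vanish at $t=p_0$ (the cubic degenerates to $W^3$ there); the Newton polygon of this genuine cubic then shows all three roots are small at every $w\mid\nu$ once $\abs{q-p_0}_\nu$ is small, uniformly in the field $\mbq(Q)$. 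A secondary, fixable point: in your last step you expand $h_i$ as a power series in $\alpha$ alone and plug in $\alpha(Q)$; smallness of $\abs{\alpha(Q)}_w$ by itself does not place $Q$ in the analytic neighbourhood of $P_0$ ($\alpha$ has other zeros on $C$), so you should estimate $h_i$ as a function of both coordinates $(t,W)$ of the plane model, using that both $\abs{q-p_0}_\nu$ and $\abs{\alpha(Q)}_w$ are small --- which is how the paper phrases its continuity estimate.
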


\begin{proof}
	As each $h_i$ is a rational function on $C$ with coefficients in $\mbq$ and regular at $P_0$, we see that there is a constant $\lambda$ depending only on the affine plane model of $C$ such that for any place $w$ of a number field $K/\mbq$ we have that 
	\[
	Q \in C(K) \cap \mba^2(K) \text{ and } \norm{Q-P_0}_w < \lambda \implies \abs{h_i([Q-P]) - 1}_w < 1
	\]
	for each $1 \leq i \leq 8$. One way to see this is to write $h_i \in \mbo_{C,P_0}$ as a power series in $\alpha$. 
	
	Let $\nu \in S$ and let
		\[
			C_{\aff} \: 0 = W^3 + a(t) W^2 + b(t) W + c(t)
		\]
	be the affine plane model which is the image of $P \mapsto (t(P), \alpha(P))$, where we may assume that $P_0$ maps to the origin and $t(P_0) = 0$. Since the fibre over $t(P_0)$ contains a unique point and $P_0$ is a non-singular point of the model we may rewrite our model as 
		\[
			C_{\aff} \: 0 = W^3 + t a(t) W^2 + tb(t) W + tc(t)
		\]
	with $a(t),b(t),c(t)$ regular at \newtext{$P_0$}. By taking $\abs{t_1}_\nu$ less than $\lambda$ and sufficiently small in terms of the coefficients of $a(t),b(t), c(t)$ we see that the slopes of the Newton polygon of
		\[
			0 = W^3 + t_1 a(t_1) W^2 + t_1 b(t_1) W + t_1 c(t_1)
		\]
	will all be at least $-\frac{1}{3} \log_p \lambda$. In particular, all of the roots have $\norm{\beta_i}_w < \lambda$ for every place $w$ of $\mbq(Q)$ lying above $\nu$. So each $\norm{(t_1,\beta_i) - (0,0)}_w < \lambda$ and we are done.
\end{proof}

\begin{example} \label{ex:main example 2}
	We provide a MAGMA script, available at \cite{kulkarni2016magmascript}, which shows how to explicitly compute an appropriate $\ell_\nu$ to apply Lemma \ref{lem:ram near p} to Example \ref{ex:main example}. Note that the defining equation for $C_{\aff}$ has integral coefficients up to powers of $2$. 
	
	In characteristic greater than $5$, the branch curve of any degree $1$ del Pezzo surface is smooth (cf. the proof of \cite[Proposition 3.2]{vak01}). Thus, any prime $p$ for which the reductions of the $8$ base points (considered as points of $\mbp^2_{\mbf_p}$) remain in general position is a prime of good reduction for $C$. Using MAGMA we can compute that the primes of bad reduction for $C$ are contained in
		\allstar{
			S' := \{ &2, 3, 5, 7, 11, 13, 17, 19, 23, 29, 31, 37, 41, 43, 47, 59, 61, 71, \\ 
			&83, 103, 107, 179, 223, 241, 389, 449, 599, 809, 1019 \}.
		}
	At each prime $p$ which is not $2$ or $3$, we can use MAGMA to compute a regular model for the curve at $p$. More specifically, we can compute the component group at the prime $p$. By \cite[Proposition 3.2]{schaefer2004howto} we may remove from consideration all primes in $S'$ where the component group has odd order (Note that the statement in \cite{schaefer2004howto} is for elliptic curves, but the result is valid for Jacobians of curves). In this particular example we do not remove any further primes.
		
	\newtext{Recall from Example \ref{ex:main example} that the effective degree $3$ divisor of $C_\aff$ given by the places of $k(C_\aff)$ over $t=\infty$ is a representative of an even theta characteristic of $C$; we call this divisor $O_E$.} The function associated to the divisor $[\Theta_1 - O_E]$, with $\Theta_1$ the divisor corresponding to the places of $k(C_{\aff})$ over $(3:7:1)$, is given on the affine model (up to a square constant) by
		\allstar{
			h(t,W) & := && -484335370397555869540982096 \ t^2 + 21745428828566997697489 \ t - \\
				   &	&&	184765518741585604 \ W + 22709411000816400
		}
	 \newtext{(recall that $C_\aff$ is defined via the blow-up of a singular plane curve)}. One can apply the explicit method of Lemma \ref{lem:ram near p} to find the required sufficiently small constants. These are listed below.
	\begin{center}
		\begin{tabular}{c|c}
			\text{Place} & $-\log_p \ell_p$ \\ \hline
			$2$ & $33$ \\
			$3$ & $21$ \\ 
			$5$ & $21$ \\ 
			$7$ & $21$ \\
			$11$  & $5$ \\ 
			$13$  & $5$ \\
			$17$  & $1$ \\
			$19$  & $5$ \\
			$23$  & $9$ \\
			$29$  & $1$ \\
			$31$  & $5$ \\
			$37$  & $1$ \\
			$41$  & $1$ \\
			$43$  & $5$ \\			
			$47$  & $1$ \\
		\end{tabular}
		\quad
		\begin{tabular}{c|c}
			\text{Place} & $-\log_p \ell_p$ \\ \hline
			$59$  & $1$ \\
			$61$  & $1$ \\
			$71$  & $1$ \\
			$83$  & $1$ \\
			$103$ & $1$ \\
			$107$ & $1$ \\
			$179$ & $5$ \\
			$223$ & $1$ \\
			$241$ & $1$ \\
			$389$ & $1$\\
			$449$ & $5$ \\
			$599$ & $5$ \\
			$809$ & $5$ \\
			$1019$ & $1$ \\
			\ & \			 
		\end{tabular}
	\end{center}
\end{example}

\begin{lemma} \label{lem:real ram near p}
	Let $\nu \in S$ be an archimedean place, let $q \in \mbp^1(\mbq)$, and let $Q \in f^{-1}(q)(\mbq^{\al})$. Then there is a constant $\ell_\nu$ depending only on $f$, $\nu$, $P_0$, a uniformizer $\alpha \in \mbo_{C,P_0}$, and the associated affine model of $C$ such that
	\[
	\abs{q-p_0}_\nu < \ell_\nu \implies \abs{h_i([Q-P_0]) - 1}_w < 1
	\]
	for each $w$ a place of $\mbq(Q)$ extending $\nu$. 
\end{lemma}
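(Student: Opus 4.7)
The plan is to follow the proof of Lemma~\ref{lem:ram near p} verbatim, replacing only the Newton polygon step with a continuity-of-roots argument (which is in fact more elementary than the tool it replaces).

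First, I would re-use the first half of the proof of Lemma~\ref{lem:ram near p} essentially unchanged: each $h_i$ is a rational function on $C$ with rational coefficients, regular at $P_0$, so expanding it as a power series in the uniformizer $\alpha$ yields a constant $\lambda > 0$ (depending on the archimedean place $\nu$, the chosen $h_i$, and the affine model) such that
\[
Q \in C(K) \cap \mba^2(K), \ \norm{Q - P_0}_w < \lambda \ \Longrightarrow \ \abs{h_i([Q - P_0]) - 1}_w < 1
\]
for every $i$ and every place $w$ of a number field $K/\mbq$ lying above $\nu$. The same power-series manipulation that worked non-archimedeanly goes through here once one notes that the series has positive radius of convergence in the standard complex topology.

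Second, I would use the same affine model
\[
C_\aff \colon 0 = W^3 + t\,a(t) W^2 + t\,b(t) W + t\,c(t)
\]
with $a,b,c$ regular at $P_0 = (0,0)$. For $t_1$ near $p_0 = 0$ in the $\nu$-topology the specialised cubic in $W$ is a small perturbation of $W^3$, whose only root is $0$. I would then invoke continuity of roots of polynomials --- or more explicitly a Cauchy-style bound such as
\[
\abs{\beta}_w \leq 3 \max\bigl(\abs{t_1 a(t_1)}_w,\ \abs{t_1 b(t_1)}_w^{1/2},\ \abs{t_1 c(t_1)}_w^{1/3}\bigr)
\]
--- to produce $\ell_\nu > 0$ such that $\abs{t_1 - p_0}_\nu < \ell_\nu$ forces every root $\beta$ of the specialised cubic to satisfy $\abs{\beta}_w < \lambda$ at every place $w$ of $\mbq(Q)$ extending $\nu$. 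Then for $Q = (t_1, \beta) \in f^{-1}(q)$ one has $\norm{Q - P_0}_w = \max(\abs{t_1}_w, \abs{\beta}_w) < \lambda$, and the first step gives the required inequality.

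The only potential subtlety is that $\nu$ may extend to $\mbq(Q)$ as either a real or a complex place depending on whether $\beta$ is real, but since the root bound simultaneously controls all three (a priori complex) roots, both cases are handled uniformly. In this sense the archimedean case is strictly easier than the non-archimedean one of Lemma~\ref{lem:ram near p}, and I do not anticipate any substantive obstacle.
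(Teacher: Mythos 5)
Your proof is correct, but it is not the argument the paper gives. The paper's proof of Lemma \ref{lem:real ram near p} is a soft topological one: it views $C_{\aff}(\mbq(Q))$ inside $C_{\aff}(\mbc)$ via the embedding determined by $w$, notes that the $h_i$ are continuous (and close to $1$, hence positive) on a small complex neighbourhood $U$ of $P_0$, and then uses the fact that $P_0$ is the \emph{only} point in the fibre over $p_0$ (total ramification) to conclude that the preimage under $f$ of a sufficiently small interval $B$ around $p_0$ is contained in $U$, with $U$ and $B$ independent of $w$; no explicit constant is produced. You instead transplant the quantitative scheme of Lemma \ref{lem:ram near p}: reuse the $\lambda$ coming from the local expansion of the $h_i$ at $P_0$, and replace the Newton-polygon step by the elementary root bound $\abs{\beta}_w \leq 3\max\bigl(\abs{t_1a(t_1)}_w, \abs{t_1b(t_1)}_w^{1/2}, \abs{t_1c(t_1)}_w^{1/3}\bigr)$, which indeed forces all three complex roots of the specialised cubic to lie within $\lambda$ of $0$ once $\abs{t_1}_\nu$ is small (and $\leq \lambda$), uniformly over the embeddings $w \mid \nu$. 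Both arguments rest on the same two pillars (continuity of the $h_i$ near $P_0$ with normalised value $1$, and total ramification at $P_0$ pulling small neighbourhoods of $p_0$ back to small neighbourhoods of $P_0$); yours has the advantage of yielding an explicitly computable $\ell_\nu$ at the archimedean place, in the same spirit as the table of Example \ref{ex:main example 2}, while the paper's is shorter but purely existential. One minor point to make airtight in your first step: the power series in $\alpha$ only represents $h_i$ on the local analytic branch through $P_0$, so you should either shrink $\lambda$ so that every point of $C_{\aff}(\mbc)$ in the polydisc of radius $\lambda$ lies on that branch (which total ramification and smoothness at $P_0$ permit), or simply write $h_i$ as a quotient of polynomials in $(t,W)$ with denominator nonvanishing at $(0,0)$ and invoke ordinary continuity on $\mba^2(\mbc)$ --- either fix is routine and does not affect the correctness of your argument.
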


\begin{proof}
	The place $w$ determines an embedding $C_{\aff}(\mbq(Q)) \inj C_{\aff}(\mbc)$. On the complex points we have that the functions
		\[
			\abs{h_i} \: C_{\aff}(\mbc) \ra \mbr
		\]
	are continuous and positive in a small neighbourhood $U$ around $P_0$. Since $P_0$ is a totally ramified point of $f$ it follows that the pullback of a small interval $B$ containing $p$ will be contained in $U$. Since the embedding of $P_0$ into $C_{\aff}(\mbc)$ does not depend on $w$ we see that $U$ and $B$ are independent of $w$ as well. 
\end{proof}

\newtext{
We will use the result of Bilu and Gillibert, based on a result of Dvornicich and Zannier \cite{dvornicich1994fields} and Hilbert's Irreducibility Theorem, to ensure that there are infinitely many non-isomorphic cubic number fields in the family we have constructed. Here, we have specialized the statement of \cite[Theorem 3.1]{bilu2016chevalley} to the particular case where the number field in question is $\mbq$. We follow the terminology of \cite{bilu2016chevalley}.

\begin{definition}
	We call $\mho \ssq \mbq$ a basic thin subset of $\mbq$ if there exists a smooth geometrically irreducible curve $C$ defined over $\mbq$ and a non-constant rational function $u \in K(C)$ of degree at least $2$ such that $\mho \ssq u(C(\mbq))$. A thin subset of $\mbq$ is a union of finitely many basic thin subsets.
\end{definition}

\begin{remark} \label{rem: hilbert irreducibility non-quantitative}
	Let $f\: C \ra \mbp_{\mbq}^1$ be a morphism of degree $d$ greater than $1$. The set of $\alpha \in \mbq$ such that $\mbq(P_t)$ is not a number field of degree $d$ over $\mbq$, where $P_t = f^{-1}(\alpha : 1)$, is a thin subset of $\mbq$ \cite[Proposition 3.5]{bilu2016chevalley}. 
\end{remark}

\begin{theorem}[ {\cite[Theorem 3.1]{bilu2016chevalley}} ] \label{thm: dvornicich-zannier-bilu-gillibert}
	Let $C$ be a curve over $\mbq$ and let $t \: C \ra \mbp^1$ be a non-constant morphism with $\deg t > 1$. Let $S$ be a finite set of places of $\mbq$ possibly containing the place at infinity. Further, let $0 < \epsilon \leq 1/2$ and let $\mho$ be a thin subset of $\mbq$. Then there exist positive numbers $c = c(\mbq, C, t, S, \epsilon)$ and $B_0 = B_0(\mbq, C, t, S, \epsilon, \mho)$ such that for every $B \geq B_0$ the following holds. Consider the points $P \in C(\mbq^\al)$ satisfying 
	\begin{align*}
		t(P) &\in \mbq \bs \mho, \\
		\abs{t(P)}_\nu &< \epsilon \hspace{0.3in} (\nu \in S), \\
		H(t(P)) &\leq B.
	\end{align*}
	Then among the number fields $\mbq(P)$, where $P$ satisfies the conditions above, there are at least $cB/\log B$ distinct fields of degree $d$ over $\mbq$.
\end{theorem}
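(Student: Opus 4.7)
The plan is to combine a sieve count of suitable rationals with quantitative Hilbert irreducibility and a multiplicity bound coming from the Diophantine geometry of $C$. I would organize the argument in three steps.

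First, I would sieve the rationals of bounded height to isolate those satisfying the local conditions. Counting reduced fractions $a/b$ with $\max(|a|,|b|) \leq B$ and $|a/b|_\nu < \epsilon$ for every $\nu \in S$ by an elementary Farey / Möbius inclusion--exclusion yields $\sim \gamma(S,\epsilon)\, B^2$ candidates for some positive constant $\gamma$ depending only on $S$ and $\epsilon$ (archimedean places contribute a density factor from the $\epsilon$-neighbourhood, non-archimedean places in $S$ contribute a congruence density factor).

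Next, I would remove the exceptional $\alpha$ lying in $\mho$, or lying in the (also thin) set of values where the fibre $t^{-1}(\alpha)$ fails to generate a degree-$d$ extension of $\mbq$ (see Remark \ref{rem: hilbert irreducibility non-quantitative}). By Cohen's quantitative form of Hilbert's Irreducibility Theorem, any thin subset of $\mbq$ contains only $O(B^{3/2}(\log B)^{O(1)})$ rationals of height at most $B$, so these removals are of lower order than $B^2$ and leave $\gg B^2$ good $\alpha$, each producing a degree-$d$ number field $K_\alpha := \mbq(P_\alpha)$.

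The final step is to bound the multiplicity of each field, i.e.\ the quantity $|\{\alpha \in \mbq : K_\alpha \cong K,\ H(\alpha) \leq B\}|$, uniformly in $K$. If $\alpha_1 \neq \alpha_2$ yield the same field $K$, then $P_{\alpha_1}, P_{\alpha_2}$ are distinct $K$-points of $C$ whose $t$-coordinates are rational and of height at most $B$, so the multiplicity of $K$ is at most the number of such $K$-points. A uniform-in-$K$ estimate of the form $O((\log B)^A)$ then gives $\gg B^2/(\log B)^A$ distinct fields, comfortably beating the claimed $cB/\log B$.

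The main obstacle is precisely this last uniformity: bounding $K$-rational points of $C$ of bounded $t$-height by a function of $B$ alone, as $K$ ranges over all degree-$d$ number fields. For $g(C) \geq 2$ one can invoke a quantitative Mordell-type theorem (Rémond) or a Mumford-style gap principle applied to the fibre product $C \times_{\mbp^1} C$; in lower genus one must replace Faltings by a descent / Chevalley--Weil argument of the type used in \cite{bilu2016chevalley, dvornicich1994fields}, which is where the logarithmic loss $B/\log B$ in the statement ultimately comes from.
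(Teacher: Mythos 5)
First, a point of comparison: the paper does not prove this statement at all --- it is quoted (specialized to the base field $\mbq$) from \cite[Theorem 3.1]{bilu2016chevalley}, whose proof rests on the specialization/ramification method of Dvornicich--Zannier \cite{dvornicich1994fields} together with Hilbert irreducibility. So your attempt has to stand on its own, and as written it has a genuine gap. Your Steps 1 and 2 are fine: the rationals of height at most $B$ satisfying the archimedean and non-archimedean smallness conditions do number $\asymp_{S,\epsilon} B^2$, and removing $\mho$ together with the thin set of $\alpha$ whose fibre fails to generate a degree-$d$ field costs only a lower-order term. But Step 3 --- a multiplicity bound, uniform over all degree-$d$ fields $K$, for the number of admissible $\alpha$ with $\mbq(P_\alpha)\iso K$ --- is the entire content of the theorem, and you neither prove it nor state it in a correct form. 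The polylogarithmic bound $O((\log B)^A)$ you aim for is false in general: the theorem allows any curve and any $t$ of degree $>1$, and already for $C=\mbp^1$ with $t$ the squaring map, a single fixed quadratic field $\mbq(\sqrt{2})$ arises from $\asymp B$ values $\alpha=2s^2$ of height at most $B$. What your pigeonhole argument actually requires is a uniform bound of the shape $O(B\log B)$ per field, and nothing you cite delivers this: R\'emond's quantitative Mordell bound is exponential in the Mordell--Weil rank of $J_C(K)$, which is not bounded as $K$ ranges over degree-$d$ fields of discriminant polynomial in $B$ (the unconditional rank bounds one gets by descent are of size $\log B$, so R\'emond only yields $B^{O(1)}$), and in genus $0$ or $1$ there is no finiteness to quantify at all. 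The phrase ``descent / Chevalley--Weil argument, which is where the logarithmic loss comes from'' is a gesture at the right literature but not an argument.

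It is also worth noting that the actual proof does not proceed by bounding $K$-points of $C$ uniformly in $K$ at all. The Dvornicich--Zannier mechanism is arithmetic rather than geometric: for most admissible $\alpha$ there is a large prime $p$ at which $\alpha$ is $p$-adically close to a branch point of $t$, forcing $p$ to ramify in $\mbq(P_\alpha)$; since a fixed field of degree $d$ and discriminant polynomially bounded in $B$ has few ramified primes, coincidences $\mbq(P_{\alpha_1})\iso\mbq(P_{\alpha_2})$ can be counted (this is where the $\log B$ loss enters), and Hilbert irreducibility handles the degree condition. If you want to complete your write-up, you should either reproduce that discriminant/ramification counting or find a genuinely uniform substitute for your Step 3; as it stands, the step that ``comfortably beats $cB/\log B$'' is exactly the step that is missing.
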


\begin{corollary} \label{cor: main result}
	Let $C$ be the curve from Example \ref{ex:main example 2}, let $f\: C \ra \mbp_{k}^1$ be the trigonal morphism from Example \ref{ex:main example}, and $S$ the set of places from Example \ref{ex:main example 2} together with the archimedean place. By Lemma \ref{lem:ram near p} and Lemma \ref{lem:real ram near p} choose constants $\ell_\nu$ for each $\nu \in S$. Let $\epsilon = \min\{ \{\ell_\nu : \nu \in S \}, 1/2 \}$. 
	
	\bigskip
	
	Let $T := \set{t_1,\ldots,t_r}$  enumerate the points of $\mba^1(\mbq)$ of height less than $B$ which satisfy $\abs{t_i}_{\nu} < \epsilon$ for each $\nu \in S$, and let $P_{t_i} := f^{-1}(t_i)$ be the corresponding fibres. Then aside from a thin set of exceptions, we have that $\mbq(P_{t_i})$ is a cubic extension of $\mbq$ with $\ranktwo \Cl(\mbq(P_{t_i}))[2] \geq 8$. Moreover, letting $\eta(B)$ be the number of isomorphism classes of number fields in the set $\set{\mbq(P_t) : t \in T}$, we have that $\eta(B) \gg \frac{B}{\log B}$. 
\end{corollary}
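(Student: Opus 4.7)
The plan is to establish, for each $t \in T$ lying outside a suitable thin set, an everywhere unramified abelian extension $L_t/K_t$ with Galois group $(\mbz/2)^8$, where $K_t := \mbq(P_t)$. Applying class field theory will then yield $\ranktwo \Cl(K_t) \geq 8$. As a first step, Remark \ref{rem: hilbert irreducibility non-quantitative} furnishes a thin subset $\mho \ssq \mbq$ outside of which $[K_t : \mbq] = 3$; I restrict attention to $t \in T \setminus \mho$.

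To build $L_t$: the fibre $P_t$ gives a $K_t$-rational divisor class $x_t := [P_t - P_0] \in J_C(K_t)$ via the Abel--Jacobi map with basepoint $P_0$. Choose any $y_t \in J_C(\mbq^{\al})$ with $[2]y_t = x_t$, and set $L_t := K_t(y_t)$. Since $\ranktwo J_C[2](\mbq) = 8$ forces $J_C[2](\mbq) \iso (\mbz/2)^8$, Lemma \ref{lem:extension is Galois} gives at once that $L_t/K_t$ is Galois with Galois group isomorphic to $(\mbz/2)^8$, unramified outside the places of $K_t$ above $S$.

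The key technical step is to verify unramifiedness at the places of $K_t$ above $S$, which is where Proposition \ref{bad places} comes in. Having identified $p_0 = 0$ as in Example \ref{ex:main example}, the defining constraint $\abs{t}_\nu < \epsilon \leq \ell_\nu$ on membership in $T$ is precisely the hypothesis of Lemma \ref{lem:ram near p} (at each finite $\nu \in S$) and of Lemma \ref{lem:real ram near p} (at the archimedean place). These lemmas yield $\abs{h_i(x_t) - 1}_w < 1$ for every $i$ and every place $w$ of $K_t$ extending some $\nu \in S$. At finite $w$ the ultrametric inequality forces $h_i(x_t)$ to be a $w$-adic unit, so $\ord_w h_i(x_t) = 0$; at real $w$ it forces $h_i(x_t) > 0$. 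Hypotheses (a) and (b) of Proposition \ref{bad places} are therefore met, so $L_t / K_t$ is unramified at every place above $S$, hence (combining with Proposition \ref{good places}) unramified everywhere. Class field theory now places $L_t$ inside the Hilbert class field of $K_t$, so $\Cl(K_t)$ surjects onto $\Gal(L_t/K_t) \iso (\mbz/2)^8$, yielding $\ranktwo \Cl(K_t) \geq 8$.

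For the count of distinct fields, I apply Theorem \ref{thm: dvornicich-zannier-bilu-gillibert} to the trigonal morphism $f\: C \ra \mbp^1$ (identified with the function $t \in k(C)$), together with the set $S$, the constant $\epsilon$, and the thin subset $\mho$ from the first paragraph. The constraints defining membership in $T$ --- local smallness at $\nu \in S$ and the height bound $H(t) \leq B$ --- match the hypotheses of the theorem exactly, and its conclusion is that there are at least $cB/\log B$ distinct degree $3$ fields among the $\mbq(P_t)$ for $B \geq B_0$, giving $\eta(B) \gg B/\log B$. The main obstacle is a matter of bookkeeping rather than substance: one must ensure the divisor representative of $[P_t - P_0]$ has support disjoint from the zeros and poles of each $h_i$ (the square class output of the Kummer pairing is invariant under the standard perturbation by principal divisors) and correctly track each $\nu \in S$ through to all places $w$ of $K_t$ above it. The uniform choice $\epsilon = \min\{\ell_\nu, 1/2\}$ makes this straightforward.
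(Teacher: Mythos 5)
There is a genuine gap at the step where you claim that $L_t := K_t(y_t)$ is Galois over $K_t$ with group $(\mbz/2)^8$ ``at once'' from Lemma \ref{lem:extension is Galois}. Since all of $J_C[2]$ is rational, the fibre $[2]^{-1}(x_t)$ is a torsor under $J_C[2](\mbq)$, and the map $\sigma \mapsto \sigma y_t - y_t$ is a homomorphism $\Gal(\mbq^{\al}/K_t) \ra J_C[2](\mbq) \iso (\mbz/2)^8$ whose \emph{image} is what $\Gal(L_t/K_t)$ is isomorphic to. Nothing in your argument forces this image to be all of $(\mbz/2)^8$: the scheme-theoretic inverse image of $\Spec K_t$ under $[2]$ is in general only an \'etale $K_t$-algebra (a Galois algebra for $(\mbz/2)^8$), and $K_t(y_t)$ can be a proper subextension --- indeed it is trivial whenever $x_t = [P_t - P_0] \in 2J_C(K_t)$. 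In that case your construction produces an unramified extension of degree $2^m$ with $m < 8$ (possibly $m=0$), and class field theory then only gives $\ranktwo \Cl(K_t) \geq m$, not $\geq 8$. Your thin set $\mho$, taken from Remark \ref{rem: hilbert irreducibility non-quantitative} applied to $f$ alone, only controls the degree of $K_t/\mbq$; it does not exclude the $t$ for which the $2$-cover degenerates.

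The paper closes exactly this hole by working with the irreducible unramified degree $2^8$ cover $X := [2]^{-1}C \ra C$ inside $J_C$ and applying Remark \ref{rem: hilbert irreducibility non-quantitative} to the composition $g\: X \ra C \ra \mbp^1$ of degree $2^8 \cdot 3$: outside a thin set, $\mbq(g^{-1}(t))$ is a field of degree $2^8\cdot 3$, which simultaneously guarantees that $\mbq(P_t)$ is cubic and that $\mbq([2]^{-1}P_t)/\mbq(P_t)$ is an honest field extension of degree $2^8$. The rest of your argument (the unramifiedness at places above $S$ via Lemma \ref{lem:ram near p}, Lemma \ref{lem:real ram near p} and Proposition \ref{bad places}, and the count via Theorem \ref{thm: dvornicich-zannier-bilu-gillibert}) matches the paper and is fine, so the fix is simply to enlarge your thin exceptional set by running Hilbert irreducibility on the composite cover $g$ rather than on $f$ alone, and to take $L_t$ to be the residue field of $[2]^{-1}P_t$.
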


}

\begin{proof}
	\newtext{Let $X := [2]^{-1}C$ be the (irreducible) unramified degree $2^8$ cover of $C$ obtained from pulling back $C$ along $[2]$ in $J_C$. By applying Remark \ref{rem: hilbert irreducibility non-quantitative}} to the composition $g \: X \overset{[2]}{\longrightarrow}  C \overset{f}{\longrightarrow} \mbp^1$ we see, aside from \newtext{a thin set of exceptions $\mho$}, that $\newtext{\mbq(g^{-1}(t))}$ is a degree $2^8 \cdot 3$ extension of $\mbq$. In particular $\newtext{\mbq(P_t) := \mbq(f^{-1}(t))}$ is a cubic extension of $\mbq$ and $\mbq([2]^{-1}(P_t))/\mbq(P_t)$ is a degree $2^8$ field extension. Additionally, the number of isomorphism classes of cubic number fields in $\set{\mbq(P_t) : t \in T}$ is an immediate consequence of \newtext{Theorem \ref{thm: dvornicich-zannier-bilu-gillibert}}.
	
	For the remainder of the argument fix such a $t \in T$ and denote $[2]^{-1} P_t := \Spec K$. By Proposition \ref{good places} we see that the extension $K/\mbq(P_t)$ is unramified outside of places lying above those in $S$. Moreover, $K/\mbq(P_t)$ is Galois with Galois group $(\mbz/2)^8$ by Lemma \ref{lem:extension is Galois}. 
	
	By Lemma \ref{lem:ram near p} we have chosen $\ell_\nu$ for each finite $\nu \in S$ such that whenever $|t-p_0|_\nu < \ell_\nu$ we have 
	\[
	|h_i([P_t-P_0]) - 1|_w < 1
	\] 
	for every place of $\mbq(P_t)$ over $\nu$. But $\ord_w h_i([P_t - P_0]) = 0$, so by Proposition \ref{bad places}	we conclude that $K/\mbq(P_t)$ is unramified at every finite place of $\mbq(P_t)$. To ensure that these extensions are unramified at the archimedean places we apply Lemma \ref{lem:real ram near p}. By class field theory we conclude that $\ranktwo \Cl (\mbq(P_t))[2] \geq 8$.
\end{proof}

\begin{remark}
We see that we obtain an improvement of the bound presented in \cite{gile12}. The quantity $\rank \mbo_{\mbq(P),S}^* - \# S$ which occurs in their estimate is minimized at $\left[ \frac{\deg f - 1}{2} \right] = 1$, giving a lower bound of $7$ for the $2$-rank of the class groups in a family.
\end{remark}

\section{Families of Galois cubic fields from genus 4 curves}

In Section 3 we obtained families of cubic number fields whose class groups have high 2-rank from genus 4 curves which were not geometrically special aside from the fact that they are uniquely trigonal \cite{vak01}. It is natural to ask if we could modify our construction to create families of special cubic number fields that have class groups with large 2-rank. 

For instance, it might be possible to produce many rational $2$-torsion classes on a curve with an affine plane model of the form $w^3=f(t)$, thereby giving rise to a family of cubic number fields that become Galois after joining a primitive third root of unity and have a class group with large 2-rank. In this section, we prove Proposition \ref{prop:no rank 8}, which shows that it is impossible to find genus 4 curves which are a $\mu_3$-branched cover of $\mbp^1$ with fully rational $2$-torsion.

\begin{definition}
	We say that a curve $C$ is a \supellname if there exists a (possibly singular) affine plane model for $C$ of the form $w^3 = f(t)$ with $f(t)$ non-constant. \newtext{In general, we say that a morphism of curves $\pi\: C \ra B$ defined over $\mbq$ is a $\mu_3$-branched cover if there is a non-constant $f(t) \in \mbq(B)$ such that $\mbq(C) = (\pi^* \mbq(B))(\sqrt[3]{f})$.}
\end{definition}

\begin{proposition} \label{prop:no rank 8}
	Let $C/\mbq$ be a \supellname of genus $4$. Then \\ $\ranktwo J_C[2](\mbq) \leq 6$.
\end{proposition}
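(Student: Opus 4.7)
The plan is to leverage the order-three automorphism $\sigma \colon (t, w) \mapsto (t, \zeta_3 w)$ of $C$, which is defined over $\mbq(\zeta_3)$ but not over $\mbq$. Write $V := J_C[2]$ and $W := V(\mbq)$; the idea is to combine the $\sigma$-action with the Galois action of $\Gal(\mbq(\zeta_3)/\mbq)$ to control $\dim_{\mbf_2} W$.

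The first step is to endow $V$ with an $\mbf_4$-vector space structure. Because $C/\langle\sigma\rangle \cong \mbp^1$ has genus zero, the $\sigma$-invariant subspace of $H^0(C,\Omega^1)$ vanishes, so the identity component of the fixed subgroup $J_C^\sigma$ is trivial and $J_C^\sigma$ is finite. A determinant computation on $H^1(C,\mbc)$---where $\sigma$ acts with eigenvalues $\zeta_3$ and $\zeta_3^{-1}$, each of multiplicity four, as one sees from the explicit basis $dt/w,\, dt/w^2,\, t\,dt/w^2,\, t^2\,dt/w^2$ of $H^0(C,\Omega^1)$ in the model $w^3=f(t)$ with $\deg f = 5$---yields $|J_C^\sigma| = \deg(1-\sigma) = 3^4$. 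Hence $J_C^\sigma$ is a $3$-group, $J_C^\sigma[2] = 0$, and the norm $1+\sigma+\sigma^2$ annihilates $V$. Since $x^2+x+1$ is irreducible over $\mbf_2$, the quotient $\mbf_4 := \mbf_2[\sigma]/(\sigma^2+\sigma+1)$ acts on $V$, making it a $4$-dimensional $\mbf_4$-vector space.

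Second, I would bring in the non-trivial element $\tau \in \Gal(\mbq(\zeta_3)/\mbq)$. Since $\tau(\zeta_3) = \zeta_3^{-1}$, conjugation yields $\tau\sigma\tau^{-1} = \sigma^{-1}$ on $J_C$ over $\ol{\mbq}$. For any $P \in W$ we have $\tau(P) = P$, so $\tau(\sigma P) = \sigma^{-1}\tau(P) = \sigma^2 P$. Thus $\sigma P$ lies in $W$ iff $(\sigma - \sigma^2)P = 0$. In $\mbf_4$, the relation $\sigma^2 = \sigma+1$ gives $\sigma - \sigma^2 = 1$, forcing $P = 0$. Consequently the $\mbf_2$-subspaces $W$ and $\sigma W$ of $V$ meet only at the origin; since $\sigma$ acts bijectively on $V$, one obtains $2\dim_{\mbf_2} W \leq \dim_{\mbf_2} V = 8$, giving $\ranktwo J_C[2](\mbq) \leq 4$---a strictly stronger bound than the claimed $6$.

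The main obstacle is the first step: knowing that $1+\sigma+\sigma^2$ vanishes only in the isogeny category would be insufficient, since this could in principle leave a nontrivial $2$-torsion contribution inside the finite kernel of the norm map. The explicit determinant computation $|J_C^\sigma| = 3^4$ is precisely what rules this out.
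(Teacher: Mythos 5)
Your argument is correct in substance, and it in fact proves more than the paper's own proof, by a genuinely different final step. The paper's Lemma \ref{lem: torsion in mu3 cover} uses the same two ingredients you use --- the conjugation relation $\act{\tau}{\sigma}=\sigma^{-1}$ for $\tau$ not fixing $\zeta_3$, and the fact that no nonzero $2$-torsion class is $\sigma$-fixed when the quotient is $\mbp^1$ --- but then finishes by counting $\gen{\sigma}$-orbits (at most one rational class per orbit of size $3$, so $2^r\le(2^{8}+2)/3$, whence $r\le 6$). You instead upgrade the orbit statement to linear algebra: the norm relation makes $J_C[2]$ a $4$-dimensional $\mbf_4$-vector space on which $G_{\mbq(\zeta_3)}$ acts $\mbf_4$-linearly and the outer involution acts Frobenius-semilinearly, so the rational subspace $W$ satisfies $W\cap\sigma W=0$ and $\ranktwo J_C[2](\mbq)\le 4$. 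I verified this descent argument independently (it is the same phenomenon as a $j=0$ elliptic curve over $\mbq$ never having full rational $2$-torsion), and it holds up; note that it is strictly sharper than the stated bound of $6$ and would answer the Question closing Section 4 in the negative, so it deserves a careful write-up. Two points need tightening. First, your computation $|J_C^\sigma|=3^4$ is justified only for the model $w^3=f(t)$ with $f$ squarefree of degree $5$, while the paper's definition allows any nonconstant $f$; either argue in general that the $\zeta_3$- and $\zeta_3^{-1}$-eigenvalue multiplicities on $H^1$ are $(4,4)$ (the trace is rational and the invariant part vanishes because the quotient is $\mbp^1$), or --- better --- observe that your own worry about ``vanishing only up to isogeny'' is moot: $1+\sigma+\sigma^2=\pi^*\pi_*$ on $\Pic^0(C)$ and $\pi_*$ lands in $J_{\mbp^1}=0$, so the norm is identically zero on $J_C(\mbq^{\al})$, exactly as in the paper's computation $D=3D=D+\sigma D+\sigma^2 D\in\pi^*(J_B)$; this makes the determinant computation unnecessary. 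Second, the identity $\tau(\sigma P)=\sigma^{-1}(\tau P)$ on divisor classes is precisely the content of the compatibility lemma preceding Lemma \ref{lem: torsion in mu3 cover} (it is not entirely formal, since $(\Pic C)^{G_\mbq}$ need not equal $\Div(C)(\mbq)/\Princ(C)$); you should prove or cite such a statement rather than assert it.
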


Notice that for curves satisfying the hypotheses of Proposition \ref{prop:no rank 8} that Galois acts both on the $[2]$-torsion of $C$ and on $\Aut(C/\mbp_\mbq^1) \iso \mu_3$. In order to prove the proposition we need to assemble all of these group actions together in a nice way. 

	\begin{lemma}
	Let $\pi \: C \ra B$ be a $\mu_3$-branched covering of curves defined over $\mbq$. Let $\rho\: G_\mbq \ra \Sp(2g(C),2)$ be the representation of Galois arising from the action of $G_\mbq$ on $J_C[2]$ and define the action of $\tau \in G_\mbq$ on $g \in \Sp(2g(C),2)$ by
	\[
	\act{\tau}{g} := \rho(\tau) g \rho(\tau)^{-1}.
	\]
	Then the representation $\psi\: \Aut(C/B) \ra \Sp(2g(C),2)$ corresponding to the action of $\Aut(C/B)$ on divisor classes commutes with the action by Galois. i.e
	\[
	\psi( \act{\tau}{\sigma}) = \rho(\tau) \psi(\sigma) \rho(\tau)^{-1}
	\]
	for all $\tau \in G_\mbq$ and $\sigma \in \Aut(C/B)$.
	\end{lemma}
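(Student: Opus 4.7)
The plan is to recognize this as a straightforward functoriality statement: since $C$, $B$, and $\pi$ are all defined over $\mbq$, so is the group scheme $\Aut(C/B)$, and the natural action map
\[
\alpha \colon \Aut(C/B) \times J_C \longrightarrow J_C
\]
(on $\mbq^{\al}$-points, induced from push-forward of divisors under an automorphism of $C$) is a morphism of $\mbq$-schemes. Any morphism of $\mbq$-schemes induces a $G_\mbq$-equivariant map on $\mbq^{\al}$-points, and the equivariance of $\alpha$ unpacks to exactly the displayed identity.

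Concretely, I would proceed in two short steps. First, fix the conventions. A $\sigma \in \Aut(C/B)(\mbq^{\al})$ is an automorphism of $C_{\mbq^{\al}}$ over $B_{\mbq^{\al}}$; the Galois twist $\act{\tau}{\sigma}$ acts on $\mbq^{\al}$-points by $(\act{\tau}{\sigma})(P) = \tau(\sigma(\tau^{-1}(P)))$, since $\tau$ acts by conjugation on the coefficients of the polynomials cutting out $\sigma$. The representation $\psi$ is induced by push-forward: for a degree-zero divisor $D = \sum_P n_P P$ with $P \in C(\mbq^{\al})$, one has $\psi(\sigma)([D]) = \bigl[\sum_P n_P \sigma(P)\bigr]$, and likewise $\rho(\tau)([D]) = \bigl[\sum_P n_P \tau(P)\bigr]$.

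Second, compute directly:
\[
(\act{\tau}{\sigma})_*(D) \;=\; \sum_P n_P \, \tau\bigl(\sigma(\tau^{-1}(P))\bigr) \;=\; (\tau \circ \sigma \circ \tau^{-1})_*(D),
\]
and passing to classes in $J_C(\mbq^{\al})$ yields $\psi(\act{\tau}{\sigma}) = \rho(\tau) \psi(\sigma) \rho(\tau)^{-1}$. Restriction to $J_C[2]$ is automatic since $\psi(\sigma)$ preserves degrees and is a group homomorphism on $\Pic^0$. There is no deep obstacle in the argument; the only point requiring care is to keep track of the fact that $\sigma$ need not be $\mbq$-rational (indeed $\Aut(C/B) \iso \mu_3$ in the intended application), so $\act{\tau}{\sigma} \neq \sigma$ in general, and the lemma asserts precisely that this twist is the adjustment needed for $\psi$ to commute with $\rho$.
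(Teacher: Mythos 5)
Your proposal is correct, and its core identity is the same one the paper relies on: the Galois twist $\act{\tau}{\sigma}$ acts on $\mbq^{\al}$-points as $\tau\sigma\tau^{-1}$, hence induces the conjugated map on divisor classes, which is exactly $\psi(\act{\tau}{\sigma}) = \rho(\tau)\psi(\sigma)\rho(\tau)^{-1}$ once one unwinds that $\rho(\tau)$ and $\psi(\sigma)$ are both given by pushing points of a representative divisor forward. The implementation differs, though. You argue abstractly: the action map $\Aut(C/B)\times J_C \ra J_C$ is defined over $\mbq$, so it is $G_\mbq$-equivariant on $\mbq^{\al}$-points, and the push-forward computation on the smooth curve $C$ finishes the proof; this uses nothing about the $\mu_3$-structure and would work for any $\mbq$-rational (sub)group of automorphisms, and it sidesteps any model-dependent bookkeeping. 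The paper instead makes everything explicit for the $\mu_3$-cover: it writes $\mbq(C)=\mbq(B)(\sqrt[3]{f})$ with $f\in\mbq(B)$, takes the (possibly singular) model $\Gamma\ssq\mbp^1\times B$ on which the generator $\sigma$ is $(w,t)\mapsto(\zeta_3 w,t)$, checks the point-level identity $(\act{\tau}{\sigma})P=(\tau\sigma\tau^{-1})P$ there, and then invokes \cite[Lemma A.2.3.1]{sh00} to move divisor representatives off the singular locus of $\Gamma$ so that the equality of actions on divisors descends to divisor classes. Your route buys generality and brevity (and avoids the singular-model step entirely, since you work with divisors on the smooth curve); the paper's route buys concreteness, making the Galois twist visible through the cyclotomic action on $\zeta_3$, which is the form in which the lemma is used in the proof of Lemma \ref{lem: torsion in mu3 cover}. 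The one convention you should make explicit, as you partly do, is that the Galois action on $\Pic^0(C)(\mbq^{\al})$ is indeed computed by conjugating points of a representative divisor (well defined because $\tau$ carries principal divisors to principal divisors, $C$ being defined over $\mbq$); with that in place your two-step argument is complete.
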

	
	\begin{proof}
		Since $\pi$ is a $\mu_3$-branched cover defined over $\mbq$, there is an $f \in \mbq(B)$ such that $\mbq(C) = \mbq(B)(\sqrt[3]{f})$ and \newtext{such that for some $\sigma$ generating $\Aut(C/B)$ we have that} $\sigma^*$ acts on $\sqrt[3]{f}$ by multiplication by $\zeta_3$. Let $\Gamma \ssq \mbp^1 \times B$ be the possibly singular model of $C$ given by the image of 
			\[
				\phi\: P \mapsto (\sqrt[3]{f}(P),\pi(P)).     
			\]
			Conveniently, $\sigma$ is explicitly described on points of $\Gamma$ as $\sigma\: (w,t) \mapsto (\zeta_3 w,t)$. We see that $(\act{\tau}{\sigma})P = (\tau\sigma\tau^{-1})P$ for every $\mbq^{\al}$-point $P$ of $\Gamma$, so $\act{\tau}{\sigma}$ and $\tau\sigma\tau^{-1}$ have the same action on the divisors of $\Gamma$. 
			Let $\Delta$ be the singular locus of $\Gamma$. By \cite[Lemma A.2.3.1]{sh00} every divisor on $C$ is linearly equivalent to a divisor $D' = \sum_{P \in C(\mbq^\al)}n_P P$ such that $n_P=0$ for each $P \in \phi^{-1}(\Delta)(\mbq^\al)$.
		
		Thus the actions determined by $\act{\tau}{\sigma}$ and $\tau\sigma\tau^{-1}$ are equal on divisor classes of $C$. (This is true despite the fact that in general, $(\Pic C)^{G_\mbq} \neq \Div(C)(\mbq)/\Princ(C)$).
	\end{proof}

	\begin{lemma} \label{lem: torsion in mu3 cover}
		Let $\pi\: C \ra B$ be a $\mu_3$-branched cover of curves defined over $\mbq$ with $g(C) > 1$. Then
			\begin{enumerate}[(i)]
				\item $\ranktwo J_C[2](\mbq) < 2g(C)$.
				\item If in addition $B$ has genus 0, then $\ranktwo J_C[2](\mbq) \leq \log_2 \frac{2^{2g(C)}+2}{3}$. 
			\end{enumerate}
	\end{lemma}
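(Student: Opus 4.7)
The plan is to decompose $J_C[2]$ under the action of $S := \psi(\sigma)$, exploit the resulting $\mbf_4$-structure via semi-linear Galois descent, and in case (ii) replace descent by an orbit count.

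Since $S^3 = 1$ and $x^3 - 1 = (x-1)(x^2+x+1)$ with $x^2+x+1$ irreducible over $\mbf_2$, I would first write
\[
J_C[2] = V_1 \oplus V_\omega, \qquad V_1 := \ker(S - 1), \quad V_\omega := \ker(S^2 + S + 1).
\]
The factor $V_\omega$ becomes a vector space over $\mbf_4 := \mbf_2[S]/(S^2+S+1)$, so $\dim_{\mbf_2} V_\omega$ is even. Pick $\tau \in G_\mbq$ with $\tau(\zeta_3) = \zeta_3^{-1}$ (possible since $\zeta_3 \notin \mbq$); then $\act{\tau}{\sigma} = \sigma^{-1}$, and the preceding lemma gives $\rho(\tau) S \rho(\tau)^{-1} = S^{-1}$. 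This forces both $V_1$ and $V_\omega$ to be $G_\mbq$-stable, makes $\tau$ act $\mbf_4$-semilinearly on $V_\omega$ (intertwining $S$ with its $\mbf_4$-conjugate $S^{-1}$), while $G_{\mbq(\zeta_3)}$ acts $\mbf_4$-linearly.

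For (i), I would argue $V_\omega \neq 0$ by combining Riemann--Hurwitz with the arithmetic of $\mbz[\zeta_3]$. Riemann--Hurwitz for a degree-$3$ cyclic cover gives $g(C) = 3g(B) - 2 + r$ with $r$ the number of branch points; a brief case analysis shows $g(C) > 1$ forces $g(C) > g(B)$, so the Prym $P := (\ker \pi_*)^0$ is positive-dimensional. On $P$ one has $1+\sigma+\sigma^2 = 0$, hence $\End(P) \supseteq \mbz[\zeta_3]$ with $\sigma$ acting as $\zeta_3$. Because $2$ is inert in $\mbz[\zeta_3]$, the ideals $(2)$ and $(\zeta_3 - 1)$ are coprime, so $P[2] \cap \ker(\sigma - 1) = 0$ inside $P$; as $P[2] \neq 0$, this shows $\sigma$ acts nontrivially on $P[2] \ssq J_C[2]$, so $V_\omega \neq 0$ and $\dim_{\mbf_2} V_\omega \geq 2$. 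Semi-linear descent yields $\dim_{\mbf_2} V_\omega^{G_\mbq} \leq \tfrac{1}{2} \dim_{\mbf_2} V_\omega$, so
\[
\ranktwo J_C[2](\mbq) \leq \dim_{\mbf_2} V_1 + \tfrac{1}{2} \dim_{\mbf_2} V_\omega \leq 2g(C) - 1 < 2g(C).
\]

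For (ii), when $B = \mbp^1$ we have $\pi_* = 0$, so $1 + \sigma + \sigma^2 = \pi^* \pi_* = 0$ on $J_C$ and hence on $J_C[2]$; thus $V_1 = 0$ and $J_C[2] = V_\omega$, an $\mbf_4$-space of dimension $g(C)$. Since $S$ has no nonzero fixed points on $V_\omega$, the $\langle S \rangle$-orbits on $J_C[2] \bs \set{0}$ all have size $3$, producing $(4^{g(C)} - 1)/3$ such orbits. The relation $\rho(\tau) S = S^{-1} \rho(\tau)$ shows that if both $v$ and $S v$ lay in $J_C[2](\mbq)$ with $v$ nonzero, then $Sv = \rho(\tau)(Sv) = S^{-1}v = S^2 v$, i.e. $(S-1)v = 0$; but $S-1$ is invertible on $V_\omega$, forcing $v = 0$, a contradiction. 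Hence each nontrivial orbit meets $J_C[2](\mbq)$ in at most one point, giving
\[
|J_C[2](\mbq)| \leq \frac{4^{g(C)} - 1}{3} + 1 = \frac{2^{2g(C)} + 2}{3},
\]
which is the claimed bound. The main obstacle is the nonvanishing $V_\omega \neq 0$ in part (i): the remainder is linear algebra plus a clean orbit count, but this step requires assembling Riemann--Hurwitz, the Prym construction, and the inertness of $2$ in $\mbz[\zeta_3]$.
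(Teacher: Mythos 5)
Your proof is correct, and part (ii) is essentially the paper's argument: you show no nonzero $2$-torsion class is fixed by $\sigma$ (via $\pi^*\pi_*=1+\sigma+\sigma^2=0$ when $J_B=0$, which is the same norm computation the paper uses), then count $\gen{\sigma}$-orbits and use the relation $\rho(\tau)\psi(\sigma)\rho(\tau)^{-1}=\psi(\sigma)^{-1}$ for a $\tau$ inverting $\zeta_3$ to see that each size-$3$ orbit contains at most one rational class, giving $\#J_C[2](\mbq)\leq (2^{2g(C)}+2)/3$. Part (i), however, takes a genuinely different and heavier route than the paper. The paper simply notes that the $\sigma$-fixed $2$-torsion classes lie in $\pi^*(J_B)$ (again by $D=3D=\pi^*\pi_*D$), that this is a proper subgroup of $J_C[2]$ since $g(B)<g(C)$, and then applies the same conjugation trick to a rational, non-fixed class to produce a non-rational class; no Prym, Riemann--Hurwitz, or $\mbz[\zeta_3]$ input is needed. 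You instead split $J_C[2]=V_1\oplus V_\omega$ along $x^3-1=(x-1)(x^2+x+1)$ over $\mbf_2$ (the projectors $S^2+S+1$ and $S^2+S$ are symmetric under $S\mapsto S^{-1}$, so the splitting is Galois-stable), prove $V_\omega\neq 0$ via the Prym of the cover together with the coprimality of $(2)$ and $(\zeta_3-1)$ in $\mbz[\zeta_3]$, and then use $\mbf_4/\mbf_2$-semilinear descent to bound the rational part of $V_\omega$ by half its $\mbf_2$-dimension. All of these steps check out (the fixed set of a single semilinear involution-type map indeed has $\mbf_2$-dimension at most $\dim_{\mbf_4}V_\omega$, and $P[2]\subseteq V_\omega$ is nonzero of even dimension), and your version in fact yields the slightly sharper bound $\ranktwo J_C[2](\mbq)\leq \dim_{\mbf_2}V_1+\tfrac12\dim_{\mbf_2}V_\omega$; the trade-off is that the nonvanishing of $V_\omega$ could have been obtained for free from the paper's observation that $\ker(\psi(\sigma)-1)\subseteq \pi^*(J_B)\cap J_C[2]$, which has order at most $2^{2g(B)}<2^{2g(C)}$, avoiding the Prym and Riemann--Hurwitz machinery entirely.
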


	\begin{proof}
		\begin{enumerate}[(i)]
			\item
			Let $\sigma$ be a non-trivial element of $\Aut(C/B)$ and let $D \in J_C[2](\mbq)$. Note that the image of $\pi^*\: J_B[2] \ra J_C[2]$ consists of exactly the $2$-torsion classes fixed by $\sigma$. Indeed if $D$ is $\sigma$-stable then
				\[
					D = [3]D = D + \sigma D + \sigma^2 D \in \pi^*(J_B).
				\]
			Since the genus of $C$ is greater than $1$, we have that $g(B) < g(C)$ and hence that $\pi^*$ is not surjective. Thus we either have $\ranktwo J_C[2](\mbq) < 2g(C)$ or we can find a $D \in J_C[2](\mbq)$ which is not $\sigma$-invariant. But now
				\[
				\rho(\tau)\sigma(D) = \rho(\tau) \sigma \rho(\tau)^{-1}(D) = {\act{\tau}{\sigma}(D)} = \sigma^{-1}(D) \neq \sigma(D)
				\]
			so $J_C[2]$ is not fully rational.	
			
			\item
			Let $D$ be a non-trivial $2$-torsion class of $J_{C}[2]$. \newtext{We see from the calculation in part (i) that if $B = \mbp_\mbq^1$ then $D$ cannot be fixed by $\sigma$.} Since $D$ is not $\sigma$-stable it follows by the argument in part $(i)$ that at most one of $D,\sigma D, \sigma^2 D$ is rational. The quantity $\frac{2^{2g(C)}+2}{3}$ is the number of $\gen{\sigma}$-orbits in $J_C[2]$.
		\end{enumerate} 
	\end{proof}

	Proposition \ref{prop:no rank 8} follows immediately from Lemma \ref{lem: torsion in mu3 cover}. We note that it is possible to construct a genus 4 curve $C$ which is a $\mu_3$-branched cover of another curve $B$ and has $\ranktwo J_C[2](\mbq) = 6$ using del Pezzo surfaces. The eight points
		\begin{gather*}
			 (0:1:0),(0:0:1), (1:1:1), (1:\zeta_3:\zeta_3^2),  \\
				(1:\zeta_3^2:\zeta_3), (3:4:5), (3:4 \zeta_3:5 \zeta_3^2), (3:4 \zeta_3^2:5 \zeta_3)
		\end{gather*} 
	are in general position, invariant under a linear $\mu_3$ automorphism of $\mbp^2$, and have six distinct Galois orbits. Consequently, the associated genus 4 curve $C$ has $\mu_3 \ssq \Aut_{\mbq^\al}(C)$ and $\ranktwo J_C[2](\mbq) = 6$. However, $C/\mu_3$ is not isomorphic to $\mbp^1$. We have yet to find an example of a \supellname of genus 4 whose 2-rank is 6.
	
	\begin{question}
		Is there a \supellname of genus four $C$ defined over $\mbq$ with $\rank_2 J_C[2](\mbq)$ equal to 6?
	\end{question}


\begin{thebibliography}{BCP97}
	
	\bibitem[AC55]{ankeny1955divisibility}
	N.~C. Ankeny and S.~Chowla.
	\newblock On the divisibility of the class number of quadratic fields.
	\newblock {\em Pacific J. Math.}, 5:321--324, 1955.
	
	\bibitem[BCP97]{MAGMA}
	Wieb Bosma, John Cannon, and Catherine Playoust.
	\newblock The {M}agma algebra system. {I}. {T}he user language.
	\newblock {\em J. Symbolic Comput.}, 24(3-4):235--265, 1997.
	\newblock Computational algebra and number theory (London, 1993).
	
	\bibitem[BG16]{bilu2016chevalley}
	Yuri Bilu and Jean Gillibert.
	\newblock Chevalley-weil theorem and subgroups of class groups.
	\newblock {\em arXiv preprint arXiv:1606.03128}, 2016.
	
	\bibitem[BL05]{bilu2005divisibility}
	Yuri~F. Bilu and Florian Luca.
	\newblock Divisibility of class numbers: enumerative approach.
	\newblock {\em J. Reine Angew. Math.}, 578:79--91, 2005.
	
	\bibitem[CL84]{cohen-lenstra}
	H.~Cohen and H.~W. Lenstra, Jr.
	\newblock Heuristics on class groups of number fields.
	\newblock In {\em Number theory, {N}oordwijkerhout 1983 ({N}oordwijkerhout,
		1983)}, volume 1068 of {\em Lecture Notes in Math.}, pages 33--62. Springer,
	Berlin, 1984.
	
	\bibitem[DZ94]{dvornicich1994fields}
	R.~Dvornicich and U.~Zannier.
	\newblock Fields containing values of algebraic functions.
	\newblock {\em Ann. Scuola Norm. Sup. Pisa Cl. Sci. (4)}, 21(3):421--443, 1994.
	
	\bibitem[GL12]{gile12}
	Jean Gillibert and Aaron Levin.
	\newblock Pulling back torsion line bundles to ideal classes.
	\newblock {\em Math. Res. Lett.}, 19(6):1171--1184, 2012.
	
	\bibitem[Hon68]{honda1968real}
	Taira Honda.
	\newblock On real quadratic fields whose class numbers are multiples of {$3$}.
	\newblock {\em J. Reine Angew. Math.}, 233:101--102, 1968.
	
	\bibitem[HS00]{sh00}
	Marc Hindry and Joseph~H. Silverman.
	\newblock {\em Diophantine geometry}, volume 201 of {\em Graduate Texts in
		Mathematics}.
	\newblock Springer-Verlag, New York, 2000.
	\newblock An introduction.
	
	\bibitem[Kul16]{kulkarni2016magmascript}
	Avinash Kulkarni.
	\newblock Magma script accompanying this paper, 2016.
	\newblock Available at \url{http://www.sfu.ca/~akulkarn/delPezzoBranchCurve.m}.
	
	\bibitem[Mes83]{mes83}
	Jean-Francois Mestre.
	\newblock Courbes elliptiques et groupes de classes d'id\'eaux de certains
	corps quadratiques.
	\newblock {\em J. Reine Angew. Math.}, 343:23--35, 1983.
	
	\bibitem[Mur99]{murty1999exponents}
	M.~Ram Murty.
	\newblock Exponents of class groups of quadratic fields.
	\newblock In {\em Topics in number theory ({U}niversity {P}ark, {PA}, 1997)},
	volume 467 of {\em Math. Appl.}, pages 229--239. Kluwer Acad. Publ.,
	Dordrecht, 1999.
	
	\bibitem[Nag22]{nagel1922klassenzahl}
	Trygve Nagell.
	\newblock \"{U}ber die {K}lassenzahl imagin\"ar-quadratischer {Z}ahlk\"orper.
	\newblock {\em Abh. Math. Sem. Univ. Hamburg}, 1(1):140--150, 1922.
	
	\bibitem[Nak86]{nakano1986construction}
	Shin Nakano.
	\newblock On the construction of pure number fields of odd degrees with large
	{$2$}-class groups.
	\newblock {\em Proc. Japan Acad. Ser. A Math. Sci.}, 62(2):61--64, 1986.
	
	\bibitem[Sch98]{schaefer1998computing}
	Edward~F. Schaefer.
	\newblock Computing a {S}elmer group of a {J}acobian using functions on the
	curve.
	\newblock {\em Math. Ann.}, 310(3):447--471, 1998.
	
	\bibitem[SS04]{schaefer2004howto}
	Edward~F. Schaefer and Michael Stoll.
	\newblock How to do a {$p$}-descent on an elliptic curve.
	\newblock {\em Trans. Amer. Math. Soc.}, 356(3):1209--1231, 2004.
	
	\bibitem[Vak01]{vak01}
	Ravi Vakil.
	\newblock Twelve points on the projective line, branched covers, and rational
	elliptic fibrations.
	\newblock {\em Math. Ann.}, 320(1):33--54, 2001.
	
	\bibitem[Wei73]{weinberger1973quadratic}
	P.~J. Weinberger.
	\newblock Real quadratic fields with class numbers divisible by {$n$}.
	\newblock {\em J. Number Theory}, 5:237--241, 1973.
	
	\bibitem[Yam70]{yamamoto1970unramified}
	Yoshihiko Yamamoto.
	\newblock On unramified {G}alois extensions of quadratic number fields.
	\newblock {\em Osaka J. Math.}, 7:57--76, 1970.
	
	\bibitem[Zar08]{zar08}
	Yu.~G. Zarhin.
	\newblock Del {P}ezzo surfaces of degree 1 and {J}acobians.
	\newblock {\em Math. Ann.}, 340(2):407--435, 2008.
	
\end{thebibliography}

\end{document}